\newcommand{\V}{\mathrm{V}}
\newcommand{\W}{\mathrm{W}}
\newcommand{\G}{\mathrm{G}}
\newcommand{\GL}{\mathrm{GL}}
\newcommand{\SL}{\mathrm{SL}}
\newcommand{\R}{\mathbb{R}}
\newcommand{\Q}{\mathbb{Q}}
\newcommand{\Z}{\mathbb{Z}}
\newcommand{\N}{\mathbb{N}}
\newcommand{\C}{\mathbb{C}}
\newtheorem{theorem}{Theorem}[section]
\newtheorem{lemma}[theorem]{Lemma}
\newtheorem{corollary}[theorem]{Corollary}
\author{Sandip Singh}
\address{School Of Mathematics, Tata Institute of Fundamental Research, Homi Bhabha Road, Mumbai 400005, India}
\email{sandips@math.tifr.res.in}
\subjclass[2010]{Primary: 20F55; Secondary: 22E40} 
\keywords{Coxeter groups, Irreducible lattices, Orthogonal groups, Superrigidity}
\newenvironment{remark}[1][Remark]{\begin{trivlist}
\item[\hskip \labelsep {\bfseries #1.}]}{\end{trivlist}}
\begin{document}

\title{Coxeter groups are not higher rank arithmetic groups}
\vskip 5mm

\begin{abstract}
 Let $\W$ be an irreducible finitely generated Coxeter group. The geometric representation of $\W$ in $\GL(\V)$ provides a discrete embedding in the orthogonal group of the Tits form (the associated bilinear form of the Coxeter group). If the Tits form of the Coxeter group is non-positive and non-degenerate, the Coxeter group does not contain any finite index subgroup isomorphic to an irreducible lattice in a semisimple group of $\R$-rank $\geq2$.
\end{abstract}
\maketitle

\section{Introduction}
Let $S=\{s_1,s_2,\ldots,s_n\}$ be a finite set and $\W$ be a group generated by $S$ with the relations $$(s_{i}s_{j})^{m_{i,j}}=1,$$ where $m_{i,i}=1,\ \forall\ 1\leq{i}\leq{n}$ and $m_{i,j}\in\{2,3,\ldots,\infty\},\ \forall i\neq{j}$. The group $\W$ is called the Coxeter group. The Coxeter system ($\W,S$) is called irreducible if the Coxeter graph (\cite[Section 2.1]{Hum}) is connected. Now we define a symmetric bilinear form (Tits form) $B$ on a vector space $\V$ of dim $n$ over $\R$, with a basis $\{e_1,e_2,\ldots,e_n\}$ in one-to-one correspondence with $S$ as $$B(e_i,e_j)=-\mbox{cos}\left(\frac{\pi}{m_{i,j}}\right),\ \forall\ 1\leq{i,j}\leq{n}.$$ (This expression is interpreted to be -1 in case $m_{i,j}=\infty$.)

For each $s_i\in{S}$, we can now define a reflection $\sigma_i:\V\rightarrow{\V}$ by the rule: $$\sigma_i\lambda=\lambda{-} 2 B(e_i,\lambda)e_i.$$ Clearly $\sigma_{i}e_i = -e_i$, while $\sigma_i$ fixes $H_i=\{v\in{\V}| B(v,e_i)=0\}$ pointwise. In particular, we see that $\sigma_i$ has order 2 in $\GL(\V)$. The bilinear form $B$ is preserved by all of the elements $\sigma_i$, and hence it will be preserved by each element of the subgroup of $\GL(\V)$ generated by the $\sigma_i(1\leq{i}\leq{n}).$

By defining $s_i\mapsto\sigma_i$, we get a unique homomorphism $\sigma: \W\rightarrow\GL(\V)$ sending $s_i$ to $\sigma_i$, and the group $\sigma(\W)$ preserves the form $B$ on $\V$; and for each pair $s_i,s_j\in{S}$, the order of $s_{i}s_{j}$ in $\W$ is precisely $m_{i,j}$ (\cite[Proposition 5.3]{Hum}). Also, the representation $\sigma: \W\rightarrow\GL(\V)$ is {\it faithful} (\cite[Corollary 5.4]{Hum}).

Relative to the basis $\{e_1,e_2,\ldots,e_n\}$ of $\V$, we can identify $\V$ with $\R^n$ and $\GL(\V)$ with $\GL(n,\R)$, the latter in turn being viewed as an open set in $\R^{n^2}$. It follows from  \cite[Proposition 6.2]{Hum} that $\sigma(\W)$ is a discrete subgroup of $\GL(\V)$.

In this paper we will assume that the Coxeter system $(\W,S)$ is {\it irreducible} and the Tits form $B$ is {\it non-degenerate} and the Coxeter group $\W$ is {\it infinite}. By the earlier observations, it follows that $\W$ is a discrete subgroup of the corresponding orthogonal group $\G:=\mathrm{O}(B)(\R)$. Moreover, $\G$ is a real Lie group, with a Haar measure which provides a notion of volume $\nu$ for $\W\backslash\G$, the homogeneous space of right cosets of $\G$ with respect to $\W$. If the measure $\nu$ on $\W\backslash\G$ is finite and $\G$-invariant, then $\W$ is  a lattice in $\G$.

The goal of this paper is to prove Theorem \ref{t1} (stated below), which has been proved in \cite{CLR} also, by using a different technique. They have proved that an infinite Coxeter group has a subgroup of finite index which admits a homomorphism onto $\Z$ (\cite[Theorem 1.1]{CLR}) and used it to prove the theorem. I have tried here to give an elementary proof of the theorem by using a Bourbaki exercise (\P 12, Exercise $\S4$  of Chapter \rm{V} in \cite{Bou})  and Margulis superrigidity (Theorem \ref{t4}, below). 

\begin{theorem}\label{t1}
 If $\W$ is an irreducible finitely generated Coxeter group with the non-positive and non-degenerate Tits form, then it does not contain any finite index subgroup isomorphic to an irreducible lattice in a connected semisimple Lie group without non-trivial compact factor groups, of real rank $\geq2$.
\end{theorem}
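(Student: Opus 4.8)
The plan is to derive a contradiction from two inputs: the Zariski density of the geometric representation, which is the content of the cited Bourbaki exercise, and Margulis superrigidity. Suppose, for contradiction, that $\W$ contained a finite index subgroup $\W'$ and an isomorphism $\varphi\colon\Gamma\to\W'$, where $\Gamma$ is an irreducible lattice in a connected semisimple Lie group $H$ with no non-trivial compact factor and $\R$-rank $\geq 2$. Under the standing hypotheses ($B$ non-degenerate and not positive definite, $\W$ infinite) one first checks that $n=\dim\V\geq 3$ and that $B$ is indefinite, so that $\mathrm{SO}(B)$ is a connected semisimple $\R$-group with $\mathrm{SO}(B)(\R)$ non-compact. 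Passing to finite index subgroups of $\Gamma$ and $\W'$ where convenient -- this keeps $\Gamma$ an irreducible higher-rank lattice, keeps it faithfully linear so that $H$ may be assumed to have finite centre, and can be arranged so that $\sigma(\W')\subseteq\mathrm{SO}(B)(\R)^{\circ}$ -- I set up the situation for superrigidity.

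The first substantive step is the Zariski density. By the Bourbaki exercise (\S4, Exercise~12 of Chapter~V of \cite{Bou}), the irreducibility of $(\W,S)$ together with the non-degeneracy of $B$ and the infiniteness of $\W$ imply that $\sigma(\W)$ is Zariski dense in $\mathrm{O}(B)$; since a finite index subgroup of a Zariski dense subgroup is Zariski dense in the identity component, $\sigma(\W')$ is Zariski dense in $\mathrm{SO}(B)$. Being infinite and discrete, $\sigma(\W')$ is not relatively compact in the non-compact group $\mathrm{SO}(B)(\R)$. Hence $\rho:=\sigma\circ\varphi\colon\Gamma\to\mathrm{SO}(B)(\R)\subseteq\GL(\V)$ has Zariski dense and unbounded image in a group with no compact factors, so Margulis superrigidity (Theorem~\ref{t4}) applies: after one more harmless passage to finite index subgroups (the bounded correction in the superrigidity theorem being trivial here, as $\mathrm{SO}(B)$ has no compact factors) the homomorphism $\rho$ extends to a continuous homomorphism $\tilde\rho\colon H\to\mathrm{SO}(B)(\R)$.

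Next I analyse $\tilde\rho$. The Zariski closure of $\tilde\rho(H)$ is a semisimple subgroup of $\mathrm{SO}(B)$ containing the Zariski dense subgroup $\sigma(\W')=\rho(\Gamma)$, hence equals $\mathrm{SO}(B)$; since $\tilde\rho(H)$ is connected, $\tilde\rho$ maps $H$ onto $\mathrm{SO}(B)(\R)^{\circ}$. Let $N=\ker\tilde\rho$, a closed normal subgroup of $H$ whose identity component is a product of some of the simple factors of $H$, hence -- as $H$ has no compact factors -- non-compact if positive dimensional. If $\dim N>0$, then $\tilde\rho$ descends to an isomorphism $H/N\cong\mathrm{SO}(B)(\R)^{\circ}$ carrying the image of $\Gamma$ to $\sigma(\W')$; but irreducibility of $\Gamma$ makes its image in $H/N$ dense, while $\sigma(\W')$ is discrete in $\mathrm{SO}(B)(\R)^{\circ}$ -- impossible in a positive dimensional Lie group. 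Hence $N$ is finite and $\tilde\rho$ is an isogeny. It follows that $H$ is isogenous to $\mathrm{SO}(B)(\R)^{\circ}$, so $\mathrm{SO}(B)$ has $\R$-rank $\geq 2$, i.e.\ $B$ has signature $(p,q)$ with $\min(p,q)\geq 2$; and $\sigma(\W')=\tilde\rho(\Gamma)$, the image of a lattice under an isogeny, is a lattice in $\mathrm{SO}(B)(\R)^{\circ}$, hence in $\G=\mathrm{O}(B)(\R)$, and therefore so is the finite overgroup $\W$.

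The last step, which I expect to be the main obstacle, is to contradict the conclusion that the Coxeter group $\W$, through its geometric representation, is a lattice in $\mathrm{O}(B)(\R)$ while $B$ has signature $(p,q)$ with $\min(p,q)\geq 2$. At this point soft representation theory is exhausted and one must use the geometry of the Tits cone (the object studied in the same \S4 of Chapter~V of \cite{Bou}): an irreducible Coxeter group whose geometric representation has finite covolume in the orthogonal group of its Tits form necessarily has Tits form of Lorentzian signature $(n-1,1)$. Informally, the mirrors of the generating reflections $\sigma_i$ must bound a fundamental chamber of finite covolume, which can happen only when the fixed locus of each $\sigma_i$ is a genuine hyperplane of the relevant geometry, that is only when $\min(p,q)=1$; when $\min(p,q)\geq 2$ these fixed loci have codimension $\geq 2$, the Tits cone has infinite covolume image in $\mathrm{O}(B)$, and $\W$ cannot be a lattice. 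Since we have been forced to $\min(p,q)\geq 2$, this contradiction proves Theorem~\ref{t1}. (The remaining verifications -- finite index subgroups, finite centre of $H$, passage to the identity component, triviality of the bounded factor in superrigidity -- are routine.)
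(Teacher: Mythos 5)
Your overall architecture is the same as the paper's: use Zariski density of the geometric representation plus Margulis superrigidity to force the conclusion that $\W$ would be a lattice in $\mathrm{O}(B)(\R)$ with signature $(p,q)$, $\min(p,q)\geq 2$, and then contradict this with the statement that a Coxeter lattice in the orthogonal group of its Tits form must have Lorentzian signature $(n-1,1)$. The genuine gap is precisely at the step you yourself flag as "the main obstacle": that last statement is Theorem \ref{t3} (the Bourbaki exercise), and the paper devotes all of Section \ref{s2} to proving it, since Bourbaki only sketches it. Your informal justification of it is not correct as stated: the fixed locus of each generating reflection $\sigma_i$ is the hyperplane $H_i=\{v\in\V\mid B(v,e_i)=0\}$, which has codimension one in $\V$ for every signature, so the claim that "when $\min(p,q)\geq 2$ these fixed loci have codimension $\geq 2$" fails, and with it your reason why $\W$ cannot have finite covolume. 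The actual argument (as in the paper) runs differently: by Tits' theorem (Theorem \ref{t5}) the set $\Delta=\{g\in\G : g(\mathrm{D})\subset\mathrm{C}\}$, for a ray $\mathrm{D}=\R_{>0}v\subset\mathrm{C}$ with $B(v,v)\neq 0$, injects into $\W\backslash\G$; finite covolume then forces $\mu(\Delta)<\infty$, hence the stabilizer $\G_{\mathrm{D}}$ is compact; since $\G_{\mathrm{D}}$ contains a copy of $\mathrm{O}(B|_{v^{\perp}})(\R)$, the restriction of $B$ to $v^{\perp}$ must be definite, so the signature is $(n-1,1)$ or $(1,n-1)$, and the latter is excluded by examining a rank-two or rank-three subform. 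Without this (or an equivalent) argument, your proof reduces the theorem to an unproved assertion.

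Two lesser points. First, you misattribute the Zariski density of $\sigma(\W)$ in $\mathrm{O}(B)$ to the Bourbaki exercise; that exercise is exactly the signature statement above, while the density you need is the theorem of Benoist--de la Harpe (Theorem \ref{t7} in the paper). The density is true, so this is a citation error rather than a mathematical one, but as written your two inputs are mislabelled. Second, the superrigidity theorem as stated (Theorem \ref{t4}(b)) requires the target group to be adjoint, and $\mathrm{SO}(B)$ need not be; the paper handles this by composing with the central isogeny $\pi:\mathrm{SO}(B)\to\tilde{\G}$ (Lemma \ref{l4}), extending $\pi\circ\delta$ to $\mathrm{H}\to\tilde{\G}(\R)$, and then pulling the lattice property back through $\pi$ (Lemma \ref{l6}) and comparing ranks (Lemma \ref{l5}). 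Your direct extension to $\mathrm{SO}(B)(\R)$ is not literally licensed by the quoted theorem, though your kernel analysis and the rank/lattice conclusions survive this standard repair.
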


In fact more is true: 
\begin{theorem}\label{t2}
\noindent $(a)$ If $\W$ is an irreducible finitely generated Coxeter group with the non-positive and non-degenerate Tits form, then it does not contain any finite index subgroup isomorphic to a higher rank S-arithmetic group (i.e. lattice in a product of Lie groups and $p$-adic groups).

For example, the Coxeter group $\W$ does not contain any finite index subgroup isomorphic to $\SL_2(\Z[\frac{1}{p}])$ in $\SL_2(\R)\times\SL_2(\Q_p)$. 

\noindent $(b)$ More generally, if $k_1, k_2,\ldots, k_r$ are local fields and $\G_1, \G_2,\ldots$, $\G_r$ are semisimple algebraic groups defined over $k_1, k_2,\ldots, k_r$ respectively such that each $\G_i$ has $k_i$-rank $\geq{1}$ and $\sum_{i=1}^{r}k_i\mbox{-rank }(\G_i)\geq{2}$, then $\W$ does not contain any finite index subgroup isomorphic to an irreducible lattice $\Gamma$ in $\prod_{i=1}^{r}\G_i(k_i)$.

For example, the Coxeter group $\W$ does not contain any finite index subgroup isomorphic to $\SL_3(\mathbb{F}_p[t])$ in $\SL_3(\mathbb{F}_p((\frac{1}{t})))$.
\end{theorem}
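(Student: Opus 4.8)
The plan is to argue by contradiction, feeding the geometric representation of $\W$ into Margulis superrigidity and using the cited Bourbaki exercise to control the Zariski closure of the resulting image. So suppose $\Gamma\le\W$ is a finite-index subgroup isomorphic to an irreducible lattice $\Lambda$ in $H:=\prod_{i=1}^{r}\G_{i}(k_{i})$ with $\sum_{i}\mathrm{rank}_{k_{i}}(\G_{i})\ge2$. Then $\Gamma$ is infinite and finitely generated, and $\rho:=\sigma|_{\Gamma}\colon\Gamma\to\GL(\V)=\GL(n,\R)$ is faithful with $\rho(\Gamma)$ a discrete subgroup of $\G:=\mathrm{O}(B)(\R)$. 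Since an infinite discrete subgroup of $\GL(n,\R)$ cannot be relatively compact, $\rho$ is an \emph{unbounded} representation.

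Next I would apply Margulis superrigidity (Theorem \ref{t4}) to $\rho$, viewing $\GL(n,\R)$ over the local field $\R$. Because $\Lambda$ is irreducible of total rank $\ge2$ and $\rho$ is unbounded, superrigidity produces — after replacing $\Gamma$ by a finite-index subgroup $\Gamma_{0}$ — a continuous homomorphism $\tilde\rho\colon H_{\infty}\to\GL(n,\R)$, where $H_{\infty}$ is the product of the Archimedean factors $\G_{i}(k_{i})$, together with a homomorphism $\chi\colon\Gamma_{0}\to K$ into a compact group, such that $\rho(\gamma)=\tilde\rho(\pi_{\infty}(\gamma))\,\chi(\gamma)$ for $\gamma\in\Gamma_{0}$, where $\pi_{\infty}\colon H\to H_{\infty}$ is the projection. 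Only $H_{\infty}$ appears, since a non-Archimedean factor $\G_{i}(k_{i})$ is totally disconnected while $\GL(n,\R)$ has no small subgroups, hence maps to it trivially; and $\tilde\rho$ is non-trivial, for otherwise $\rho(\Gamma_{0})\subseteq K$ would be finite. One may moreover take $K\subseteq\G$, whence $\tilde\rho(H_{\infty})\subseteq\G$, the Zariski closure of $\rho(\Gamma_{0})\subseteq\G$ lying in $\mathrm{O}(B)$.

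Dispose first of the case where $H$ has a genuinely non-Archimedean factor or $\tilde\rho$ has non-trivial kernel; equivalently, $\tilde\rho\circ\pi_{\infty}$ kills a non-trivial subproduct of $H$, with complementary subproduct $H^{*}\subsetneq H$. Then $\tilde\rho\circ\pi_{\infty}=\bar\rho\circ\pi_{H^{*}}$ for a non-trivial continuous $\bar\rho\colon H^{*}\to\G$, and $\bar\rho(H^{*})$ is \emph{non-compact} and connected ($H^{*}$ being semisimple without compact factors). By irreducibility of $\Lambda$, $\pi_{H^{*}}(\Gamma_{0})$ is dense in $H^{*}$, so $\overline{\bar\rho(H^{*})}=\overline{\bar\rho(\pi_{H^{*}}(\Gamma_{0}))}\subseteq\overline{\rho(\Gamma_{0})K}=\rho(\Gamma_{0})K$ (closed, since $\rho(\Gamma_{0})$ is discrete and $K$ compact). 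But $\rho(\Gamma_{0})K$ is a disjoint union of cosets of $K$, each of which is open in it, so any connected subset passing through the identity lies inside the single coset $K$; hence $\overline{\bar\rho(H^{*})}$ would be compact — a contradiction. This already settles Theorem \ref{t2}$(a)$,$(b)$ whenever a non-Archimedean factor is present, in particular for the displayed $\SL_{2}(\Z[\frac{1}{p}])$ and $\SL_{3}(\mathbb{F}_{p}[t])$ examples.

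There remains the case in which every $k_{i}$ is Archimedean and $\tilde\rho$ has finite kernel — precisely the situation of Theorem \ref{t1}, and the crux. Now $H=H_{\infty}$ is connected semisimple of real rank $\ge2$ with no compact factors, $\Gamma$ is a lattice in it, $\tilde\rho\colon H\to\G$ has finite kernel and closed semisimple image of real rank $\ge2$, so $\G\cong\mathrm{O}(p,q)$ has real rank $\min(p,q)\ge2$. Here I invoke the Bourbaki exercise: for $(\W,S)$ irreducible with $B$ non-degenerate and non-positive, $\sigma(\W)$ — hence its finite-index subgroup $\rho(\Gamma_{0})$ — is Zariski-dense in $\mathrm{O}(B)$. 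Combining this with $\rho(\Gamma_{0})\subseteq\tilde\rho(H)\cdot K$, and using the real-rank bound to rule out the low-dimensional exceptional isogenies of $\mathrm{SO}(p,q)$, forces $\tilde\rho(H)$ to contain $\G^{\circ}$; thus $\tilde\rho(H)\cdot K$ has finite covolume in $\G$, and since $\tilde\rho(\Gamma_{0})$ is a lattice in $\tilde\rho(H)$, $\rho(\Gamma_{0})=\tilde\rho(\Gamma_{0})\chi(\Gamma_{0})$ is a lattice in $\G$. Hence $\sigma(\W)$ itself is a lattice in $\mathrm{O}(p,q)$ with $\min(p,q)\ge2$. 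But $\sigma(\W)$ is generated by the reflections $\sigma_{1},\dots,\sigma_{n}$, and since each $e_{i}$ is positive ($B(e_{i},e_{i})=1$), the fixed-point set of $\sigma_{i}$ in the Riemannian symmetric space of $\mathrm{O}(p,q)$ is a totally geodesic submanifold of codimension $\ge2$; such "mirrors" are too high-codimensional to bound a finite-volume reflective fundamental domain — the only Coxeter groups that are lattices in orthogonal groups are the real-rank-one hyperbolic reflection groups, for which the Tits form has signature $(n-1,1)$. This final contradiction completes the proof. The principal obstacle is exactly this last case: extracting from the Margulis extension $\tilde\rho$, together with the Zariski-density supplied by the Bourbaki exercise, the conclusion that $\sigma(\W)$ must be a lattice in $\mathrm{O}(B)(\R)$, and then converting the reflection structure of $\W$ into the desired contradiction.
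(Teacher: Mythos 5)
Your overall architecture matches the paper's intention for this theorem: reduce, via superrigidity, to the statement that a finite-index subgroup of $\W$ (hence $\W$ itself) would have to be a lattice in $\mathrm{O}(B)(\R)=\mathrm{O}(p,q)$ with $p,q\geq 2$, and then contradict this. Your handling of the non-archimedean factors (density of the projection of an irreducible lattice to the complementary subproduct versus discreteness of $\rho(\Gamma_0)$, made precise through the coset decomposition of $\rho(\Gamma_0)K$) is a reasonable way to carry out what the paper only indicates by citing superrigidity over local fields of arbitrary characteristic, and it does dispose of the two displayed examples. The genuine gap is in your final, all-archimedean case, which is exactly the case of Theorem \ref{t1}. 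The paper's contradiction there is Theorem \ref{t3} — the Bourbaki exercise, proved in Section \ref{s2} via the Tits cone $\mathrm{C}$, Tits' theorem ($\mathrm{C}\cap w\mathrm{C}\neq\emptyset\Rightarrow w=1$), and a Haar-measure argument showing the stabilizer of a half-line in $\mathrm{C}$ is compact — namely: if $\W$ is a lattice in $\mathrm{O}(B)(\R)$, then $B$ has signature $(n-1,1)$, so the real rank is $1$, contradicting $p,q\geq 2$. You replace this with the claim that the mirrors of the reflections $\sigma_i$ have codimension $\geq 2$ in the symmetric space and are ``too high-codimensional to bound a finite-volume reflective fundamental domain,'' concluding that ``the only Coxeter groups that are lattices in orthogonal groups are the real-rank-one hyperbolic reflection groups.'' That last sentence is precisely the statement that has to be proved; nothing in your argument establishes it — a lattice generated by reflections need not act with a reflective fundamental domain on the symmetric space, and you cite no result excluding such lattices in $\mathrm{O}(p,q)$ with $p,q\geq 2$. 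So the crux of the proof is assumed rather than proved.

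Two further points. First, you attribute the Zariski density of $\sigma(\W)$ in $\mathrm{O}(B)$ to the Bourbaki exercise; in the paper this is the Benoist--de la Harpe theorem (Theorem \ref{t7}), while the Bourbaki exercise is the signature restriction just described — the very fact your heuristic replaces. Second, the form of superrigidity you invoke (an almost-extension $\rho(\gamma)=\tilde\rho(\pi_\infty(\gamma))\chi(\gamma)$ with compact error and $K\subseteq\G$) is stronger than Theorem \ref{t4} as quoted, and the subsequent deductions in the archimedean case — that $\tilde\rho(\mathrm{H})$ contains $\G^{\circ}$ and that $\rho(\Gamma_0)=\tilde\rho(\Gamma_0)\chi(\Gamma_0)$ is then a lattice in $\G$ — are asserted rather than argued; the paper secures these by first passing to the adjoint group through the central isogeny $\pi$ and using Zariski density together with Margulis' Remark 6.17 (Theorem \ref{t6}, Lemmas \ref{l2}--\ref{l6}, \ref{l8}). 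Those points are repairable along the paper's lines, but the missing proof of the signature restriction (Theorem \ref{t3}) is an essential omission, not a detail.
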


Theorem \ref{t2} can be proved by the same method used for the proof of Theorem \ref{t1} using Theorem \ref{t3} (stated below) and the superrigidity of lattices in semisimple groups over local fields of arbitrary characteristic (see \cite{Mar}; cf. \cite{Ven}). Therefore, in this paper we will prove Theorem \ref{t1}; and for the sake of completeness of the proof we will also prove the following theorem (stated in \cite{Bou} as an exercise):

\begin{theorem}[\P 12, Exercise $\S$ 4 of Chapter \rm{V} in \cite{Bou}]\label{t3}
If $\W$ is a lattice in $\mathrm{O}(B)(\R)$, then $B$ has signature $(n-1,1)$ and $B(v,v)<0$, for all $v\in{\mathrm{C}}$, where $\mathrm{C}:=\{v\in{\V}| B(v,e_i) > 0, \forall\ 1\leq{i}\leq{n}\}$.
\end{theorem}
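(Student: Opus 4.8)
The plan is to extract geometric information from the finite‑covolume hypothesis by testing the $\W$–action against the level hypersurfaces of the quadratic form $B$, and to play this against the properly discontinuous action of $\W$ on the interior of the Tits cone. First the easy part: since $\W$ is infinite and embeds as a discrete subgroup of $\G=\mathrm{O}(B)(\R)$, the group $\G$ is non‑compact, so $B$ is indefinite, say of signature $(p,q)$ with $p,q\geq 1$ and $n=p+q\geq 3$ (for $n\leq 2$ a non‑degenerate Tits form is positive definite, forcing $\W$ finite). Write the Gram matrix of $B$ in the basis $\{e_i\}$ as $A=I-N$, where $N$ is symmetric, entrywise non‑negative, and irreducible, its support graph being the (connected) Coxeter graph. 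By Perron--Frobenius, $N$ has a strictly positive eigenvector $v_0$ for its spectral radius $\rho$, and indefiniteness plus non‑degeneracy of $B$ give $\rho>1$ and $B(v_0,v_0)=(1-\rho)|v_0|^2<0$, while $B(-v_0,e_i)=(\rho-1)(v_0)_i>0$; hence $-v_0$ lies in the open fundamental chamber $\mathrm{C}$. Transporting the usual Tits‑cone picture from $\V^{*}$ to $\V$ along the $\W$‑equivariant isomorphism $v\mapsto B(v,\cdot)$ (available because $\sigma(\W)$ preserves $B$), we have $\mathrm{C}\subseteq U^{\circ}$, where $U$ is the Tits cone and $\W$ acts properly discontinuously on $U^{\circ}$; in particular the $\W$–stabilizer of any vector of $U^{\circ}$ is finite.

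The heart of the argument is the following mechanism. Fix $c\neq 0$ and set $X_c=\{v\in\V: B(v,v)=c\}$; by Witt's theorem this is a single $\G$–orbit, with point stabilizer $\mathrm{O}(p,q-1)$ when $c<0$ and $\mathrm{O}(p-1,q)$ when $c>0$. As $\G$ and these stabilizers are reductive, hence unimodular, $X_c$ carries a $\G$–invariant measure and one has the disintegration
\[
\nu(\W\backslash\G)=\int_{\W\backslash X_c}\mathrm{vol}\bigl(\mathrm{Stab}_{\W}(x)\backslash\mathrm{Stab}_{\G}(x)\bigr)\,dx .
\]
If the point stabilizer in $\G$ is non‑compact, then $\mathrm{vol}\bigl(\mathrm{Stab}_{\W}(x)\backslash\mathrm{Stab}_{\G}(x)\bigr)=\infty$ whenever $\mathrm{Stab}_{\W}(x)$ is finite, so the hypothesis $\nu(\W\backslash\G)<\infty$ forces $\mathrm{Stab}_{\W}(x)$ to be infinite for almost every $x\in X_c$. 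Combined with the finiteness of stabilizers on $U^{\circ}$, this means $X_c\cap U^{\circ}$ has measure zero in $X_c$; since $U^{\circ}$ is open and $X_c$ is a smooth hypersurface, this is possible only if $U^{\circ}\cap X_c=\emptyset$, i.e. $U^{\circ}$ contains no vector of $B$–norm $c$.

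Now apply this twice. Take $c=-1$: the stabilizer $\mathrm{O}(p,q-1)$ is non‑compact as soon as $q\geq 2$, so if $q\geq 2$ then $U^{\circ}$ contains no vector of negative $B$–norm; but a positive rescaling of $-v_0$ lies in $\mathrm{C}\subseteq U^{\circ}$ and has negative $B$–norm — a contradiction. Hence $q=1$ and $B$ has signature $(n-1,1)$. Next take $c=+1$: since $q=1$ and $n\geq 3$, the stabilizer $\mathrm{O}(p-1,q)=\mathrm{O}(n-2,1)$ is non‑compact, so $U^{\circ}$, and a fortiori $\mathrm{C}$, contains no vector of positive $B$–norm; thus $B(v,v)\leq 0$ for all $v\in\mathrm{C}$. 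Finally, since $\mathrm{C}$ is open while non‑degeneracy of $B$ implies that no non‑zero null vector is an interior point of $\{v:B(v,v)\leq 0\}$, we get $B(v,v)<0$ for all $v\in\mathrm{C}$, which completes the proof.

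I expect the main obstacle to be the rigorous justification of the disintegration formula in this homogeneous‑space setting — verifying that the relevant $\G$–invariant measures exist (unimodularity of $\G$ and of the point stabilizers $\mathrm{O}(p,q-1)$, $\mathrm{O}(p-1,q)$) and that an integrable function on $\W\backslash X_c$ is almost everywhere finite — together with a clean statement and citation, from the theory of the Tits cone in \cite{Bou,Hum}, of the properly discontinuous action of $\W$ on $U^{\circ}$, which is exactly what supplies the finiteness of stabilizers on $U^{\circ}$. The remaining ingredients (Perron--Frobenius, Witt's theorem, the elementary sign analysis of the null cone) are routine.
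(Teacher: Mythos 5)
Your reduction of the theorem to sign information about vectors in $\mathrm{C}$ (the Perron--Frobenius construction of a negative-norm vector in $\mathrm{C}$, and the final hyperbolic-pair argument excluding null vectors of $\mathrm{C}$) is fine and close in spirit to the paper, but the central mechanism of your proof is not valid. The claimed disintegration $\nu(\W\backslash\G)=\int_{\W\backslash X_c}\mathrm{vol}\bigl(\mathrm{Stab}_{\W}(x)\backslash\mathrm{Stab}_{\G}(x)\bigr)\,dx$ is not a theorem: writing $X_c=\G/\mathrm{H}$ with $\mathrm{H}$ the (non-compact) point stabilizer, the fibers of $\W\backslash\G\to\W\backslash\G/\mathrm{H}$ are the $\mathrm{H}$-orbits in $\W\backslash\G$, and when $\mathrm{H}$ is non-compact and $\W$ is a lattice this $\mathrm{H}$-action is typically ergodic (Moore/Howe--Moore), so the quotient $\W\backslash X_c$ is not countably separated and carries no measure for which an orbit-by-orbit volume formula of this kind holds. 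The conclusion you draw from it --- ``finite covolume forces $\mathrm{Stab}_{\W}(x)$ to be infinite for almost every $x\in X_c$'' --- is simply false in general: take $\Gamma$ a torsion-free cocompact lattice in $\mathrm{O}(2,1)$ (a surface group) and $c>0$, so $X_c$ is the one-sheeted hyperboloid with non-compact stabilizers $\cong\mathrm{O}(1,1)$; then $\mathrm{Stab}_{\Gamma}(x)$ is trivial for almost every $x$, yet $\nu(\Gamma\backslash\G)<\infty$. So both applications (ruling out $q\geq 2$, and ruling out positive-norm vectors in $\mathrm{C}$) rest on an inference that fails, and mere finiteness of stabilizers on the interior of the Tits cone cannot substitute for it.

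What the finite-covolume hypothesis actually buys, and what the paper uses, is strictly stronger information at the chamber $\mathrm{C}$ itself, coming from Tits' theorem that $\mathrm{C}\cap w\mathrm{C}\neq\emptyset$ forces $w=1$: for a ray $\mathrm{D}=\R_{>0}v\subset\mathrm{C}$, the open set $\Delta=\{g\in\G: g(\mathrm{D})\subset\mathrm{C}\}$ injects into $\W\backslash\G$ (not just ``has finite fibers''), hence has finite Haar measure when $\W$ is a lattice; since $\Delta$ is stable under right multiplication by the stabilizer $\G_{\mathrm{D}}$, this forces $\G_{\mathrm{D}}$ to be compact. Choosing $v\in\mathrm{C}$ with $B(v,v)\neq 0$ (which exists), the stabilizer contains a copy of $\mathrm{O}(B|_{v^{\perp}})(\R)$, whose compactness makes $B|_{v^{\perp}}$ definite; the signature $(n-1,1)$ and the sign $B(v,v)<0$ on $\mathrm{C}$ then follow by the elementary arguments you already have. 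If you want to repair your write-up, replace the disintegration step by this $\Delta$-argument (or prove a correct statement about closed $\mathrm{H}$-orbits with finite invariant volume, which your ergodicity-prone setting does not provide for almost every point).
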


Note that a Coxeter group $\W$ can not be a lattice in $\mathrm{O}(B)(\R)$ $=\mathrm{O}(n-1,1)$, for $n>10$ (17, Exercise $\S$ 4 of Chapter \rm{V} in \cite{Bou}). 

To prove Theorem \ref{t1} we will use the following theorem of G. A. Margulis:

\begin{theorem}[Theorem 6.16 of Chapter \rm{IX} in \cite{Mar}]\label{t4}
Let $\mathrm{H}$ be a connected semisimple Lie group without nontrivial compact factor groups. Let $\Gamma\subset\mathrm{H}$ be a lattice, $k$ a local field, $\mathrm{F}$ a connected semisimple $k$-group, and $\delta:\Gamma\longrightarrow\mathrm{F}(k)$ a homomorphism such that the subgroup $\delta(\Gamma)$ is Zariski dense in $\mathrm{F}$. Assume that rank $\mathrm{H}\geq 2$ and the lattice $\Gamma$ is irreducible. Then,

\begin{itemize} \item[(a)] for $k$ isomorphic neither to $\R$ nor to $\C$, i.e. for non-archimedean $k$, the subgroup $\delta(\Gamma)$ is relatively compact in $\mathrm{F}(k)$.

\item[(b)]  for $k=\R$, if the group $\mathrm{F}$ is adjoint and has no nontrivial $\R-$ anisotropic factors, then $\delta$ extends, uniquely, to a continuous homomorphism $\tilde{\delta}:\mathrm{H}\longrightarrow\mathrm{F}(\R)$. 
\end{itemize}
\end{theorem}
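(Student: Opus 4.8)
The plan is to follow Margulis's boundary-theoretic argument, whose main stages are the construction of an equivariant boundary map, its reduction to a point map, the proof that this map is algebraic, and finally the extension of $\delta$ (or the relative compactness of $\delta(\Gamma)$). Throughout I would first reduce to the case where $\mathrm{F}$ is $k$-simple, by projecting to the almost-simple factors of $\mathrm{F}$ and treating each separately, using that the image of a Zariski-dense subgroup under a surjection of algebraic groups is again Zariski dense. I would also fix a minimal parabolic $k$-subgroup $Q\subset\mathrm{F}$ and a faithful $k$-rational representation $\mathrm{F}\to\GL(V)$, so that the action of $\mathrm{F}(k)$ on the flag variety $\mathrm{F}/Q$ and on $\mathbb{P}(V)$ can be analyzed through the contracting (highest-weight) dynamics of one-parameter subgroups.

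The first substantive step is to produce a measurable $\Gamma$-equivariant boundary map. Let $P\subset\mathrm{H}$ be a minimal parabolic subgroup; then $P$ is amenable and, in Zimmer's sense, the $\Gamma$-action on the Furstenberg boundary $\mathrm{H}/P$ is amenable. Since $\mathrm{F}(k)$ acts continuously on the compact space $\mathrm{Prob}(\mathbb{P}(V(k)))$ of probability measures, amenability yields a measurable map $\phi:\mathrm{H}/P\to\mathrm{Prob}(\mathbb{P}(V(k)))$ with $\phi(\gamma x)=\delta(\gamma)_\ast\phi(x)$ for a.e.\ $x$ and all $\gamma\in\Gamma$. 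Using proximality of the $\mathrm{F}(k)$-action along contracting subgroups together with the Zariski density of $\delta(\Gamma)$, I would show that whenever $\delta(\Gamma)$ is not relatively compact the measure $\phi(x)$ is concentrated on point masses, upgrading $\phi$ to a measurable equivariant map into $\mathrm{F}/Q$. This is exactly where the dichotomy of the theorem originates: in the non-archimedean case (a) one runs the contraction argument on the Bruhat--Tits building of $\mathrm{F}(k)$, so that an unbounded image would force a nontrivial boundary map.

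The heart of the argument, and the step I expect to be the main obstacle, is to prove that the measurable equivariant map $\phi$ is in fact \emph{rational}, i.e.\ agrees almost everywhere with a $k$-morphism $\mathrm{H}/P\to\mathrm{F}/Q$. Here the hypothesis $\mathrm{rank}\,\mathrm{H}\geq 2$ is indispensable. The strategy is to exploit the root-group structure of a higher-rank group: one writes $\mathrm{H}/P$ as an iterated fibration over the partial flag varieties attached to the maximal parabolics, and applies Moore's ergodicity theorem (the Mautner phenomenon) to the horospherical and Levi subgroups acting on $\Gamma\backslash\mathrm{H}$. Margulis's rationality theorem then asserts that an essentially $\Gamma$-equivariant measurable map which is invariant under sufficiently many unipotent directions must coincide with a rational morphism; the proof combines effective ergodicity with the fact that the contracting dynamics forces the map to respect the algebraic structure. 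The most delicate bookkeeping is to keep careful track of fields of definition, so that the resulting morphism is defined over $k$ rather than merely over an extension.

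Finally, once $\phi$ is known to be rational and $\delta$-equivariant, I would deduce the conclusion by studying the Zariski closure of the graph of $\delta$ in $\mathrm{H}\times\mathrm{F}(k)$, which the rational relation coming from $\phi$ constrains to be the graph of an algebraic homomorphism. In case (b), where $k=\R$ and $\mathrm{F}$ is adjoint with no nontrivial $\R$-anisotropic factors, this yields a continuous homomorphism $\tilde\delta:\mathrm{H}\to\mathrm{F}(\R)$ restricting to $\delta$ on $\Gamma$; connectedness of $\mathrm{H}$ together with adjointness and Zariski density gives uniqueness. In case (a), where $k$ is non-archimedean, the same mechanism would produce a continuous homomorphism from the connected Lie group $\mathrm{H}$ into the totally disconnected group $\mathrm{F}(k)$, which is necessarily trivial; the only possibility compatible with the Zariski density of $\delta(\Gamma)$ is then that $\delta(\Gamma)$ was relatively compact from the outset, which is precisely the asserted conclusion.
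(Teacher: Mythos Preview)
The paper does not prove this statement at all: Theorem~\ref{t4} is quoted verbatim from Margulis's book and is used as a black box in the proof of Theorem~\ref{t1}. There is therefore nothing in the paper to compare your proposal against, beyond noting that you have gone further than the author, who simply cites the result.

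That said, your outline is a reasonable high-level summary of Margulis's boundary-map approach to superrigidity: construct a $\Gamma$-equivariant measurable map from the Furstenberg boundary $\mathrm{H}/P$ to a suitable target via amenability, upgrade it to a point map by proximality, prove rationality using the higher-rank structure and ergodicity of unipotent subgroups, and then read off the extension (or relative compactness) from the graph. As a sketch it is broadly correct, though two caveats are worth flagging. First, the passage from ``measurable equivariant'' to ``rational'' is the genuinely hard part, and your description (``effective ergodicity'' plus ``contracting dynamics forces the map to respect the algebraic structure'') is too vague to count as a proof plan; Margulis's argument here is delicate and occupies a substantial portion of Chapter~VII of \cite{Mar}. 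Second, in case~(a) your phrasing suggests one first builds a continuous extension $\mathrm{H}\to\mathrm{F}(k)$ and then observes it must be trivial; the actual logic is rather that if $\delta(\Gamma)$ were unbounded one could run the boundary argument to produce a nontrivial rational map, leading to a contradiction, so relative compactness is the only alternative. These are refinements rather than errors, but if you intend this as more than a roadmap you would need to fill them in.
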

In this paper (Section \ref{s3}), we will also show that a right-angled Coxeter group $\W$ generated by 3 elements is isomorphic to a lattice in the group $\mathrm{O}(B)(\R)=\mathrm{O}(2,1)$ of real rank 1.

\section{Proof of Theorem \ref{t3}}\label{s2}
The proof has been sketched in the Bourbaki exercise (\P 12, Exercise $\S$ 4 of Chapter \rm{V} in \cite{Bou}), and for the sake of completeness we fill in the details.

If $s_i\in S$, denote by $\mathrm{A_i}$, the set of $x\in\V$ such that $B(x,e_i)>0$. Clearly $\mathrm{C}=\cap_{i=1}^n\mathrm{A}_i$ is an open set in $\V$, if $S$ is finite. The following theorem is from \cite{Bou}:

\begin{theorem}[Tits]\label{t5}
If $w\in\W$ and $\mathrm{C}\cap{w}\mathrm{C}\neq\emptyset$, then $w=1$. 
\end{theorem}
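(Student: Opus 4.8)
The plan is to reduce Theorem \ref{t5} to the dichotomy that every $\W$-translate of $\mathrm{C}$ lies strictly on one side of each wall $H_i=\{v\in\V\mid B(v,e_i)=0\}$, and then to conclude by inspecting a single wall. The computation that drives everything is elementary: since $B$ is $\W$-invariant, for every $x\in\V$, $w\in\W$ and $1\le i\le n$,
\[
B(wx,\,e_i)=B\!\left(x,\,w^{-1}e_i\right),
\]
so the side of $H_i$ on which $w\mathrm{C}$ lies is governed by the single vector $w^{-1}e_i$.

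First I would invoke the structure theory of the geometric representation (as in \cite[Sections 5.3--5.7]{Hum}): every vector $ve_j$ ($v\in\W$, $1\le j\le n$) is a \emph{root}, i.e. $ve_j=\varepsilon\sum_{k=1}^{n}c_k e_k$ with $\varepsilon\in\{1,-1\}$, all $c_k\ge 0$ and not all zero; and $\varepsilon=1$ (a \emph{positive} root) exactly when $\ell(vs_j)>\ell(v)$, where $\ell$ is the word length on $\W$. Both facts are proved together by induction on $\ell(v)$, the crucial combinatorial input being that each $s_i$ negates $e_i$ and permutes the remaining positive roots. Applying this with $v=w^{-1}$ and $s_j=s_i$, and using $\ell(w^{-1}s_i)=\ell(s_iw)$ and $\ell(w^{-1})=\ell(w)$, we obtain, for any $x\in\mathrm{C}$,
\[
B(wx,e_i)=B\!\left(x,w^{-1}e_i\right)=\varepsilon\sum_{k=1}^{n}c_k\,B(x,e_k),
\]
which has the sign of $\varepsilon$ since $B(x,e_k)>0$ for every $k$. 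Hence $w\mathrm{C}\subseteq\mathrm{A}_i$ when $\ell(s_iw)>\ell(w)$, and $w\mathrm{C}\subseteq\{v\in\V\mid B(v,e_i)<0\}$ when $\ell(s_iw)<\ell(w)$.

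It then remains only to finish. Suppose $\mathrm{C}\cap w\mathrm{C}\neq\emptyset$ and $w\neq 1$. Choose a reduced expression $w=s_{i_1}s_{i_2}\cdots s_{i_k}$ with $k=\ell(w)\ge 1$; then $\ell(s_{i_1}w)=k-1<\ell(w)$, so by the dichotomy just proved $w\mathrm{C}$ lies in the open half-space $\{v\in\V\mid B(v,e_{i_1})<0\}$, which is disjoint from $\mathrm{A}_{i_1}\supseteq\mathrm{C}$. This forces $\mathrm{C}\cap w\mathrm{C}=\emptyset$, contradicting the hypothesis; therefore $w=1$.

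The main obstacle is the middle step: the root dichotomy and the length-versus-sign correspondence do not come for free but require the induction on $\ell(v)$ together with the lemma that $s_i$ preserves the set $\Phi^+\setminus\{e_i\}$ of positive roots other than $e_i$. One may either cite \cite[Sections 5.3--5.7]{Hum} for these, or, to keep things self-contained, reproduce that short induction; an essentially equivalent alternative is to run the induction on $\ell(w)$ directly in Theorem \ref{t5}, with the inductive statement strengthened to the half-space dichotomy so that it closes.
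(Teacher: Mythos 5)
Your proof is correct. Note, though, that the paper itself does not prove Theorem \ref{t5} at all: it imports it as a black box from Bourbaki (\P 12, Exercise \S 4 of Chapter V in \cite{Bou}), and all of Section \ref{s2} only \emph{uses} the statement. So your argument is not a variant of the paper's proof but a genuine supplement to it, and it is the standard one: the whole content is the dichotomy that $w^{-1}e_i$ is a nonnegative or nonpositive combination of the $e_k$ according as $\ell(s_iw)>\ell(w)$ or $\ell(s_iw)<\ell(w)$ (this is \cite[Theorem 5.4]{Hum}, applied to $w^{-1}$ and combined with $\ell(u)=\ell(u^{-1})$), after which the conclusion is a two-line computation: if $w\neq 1$, the first letter $s_{i_1}$ of a reduced word gives $\ell(s_{i_1}w)=\ell(w)-1$, hence $B(wx,e_{i_1})=B(x,w^{-1}e_{i_1})<0$ for every $x\in\mathrm{C}$, so $w\mathrm{C}$ misses $\mathrm{A}_{i_1}\supseteq\mathrm{C}$. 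The only external input is the positive/negative root theorem together with the lemma that $\sigma_i$ permutes the positive roots other than $e_i$; as you say, one must either cite \cite{Hum} for this or reproduce the induction on length, since that is where the real work lies --- your write-up correctly identifies this as the non-free step rather than sweeping it under the rug. The trade-off relative to the paper is exactly what you would expect: your route makes the exposition self-contained modulo standard Coxeter-group facts, whereas the paper keeps Section \ref{s2} shorter by outsourcing Tits' theorem to Bourbaki in the same way it outsources the superrigidity input to Margulis.
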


Let $\G$ be a closed subgroup of $\GL(\V)$ containing $\W$. Let $\G$ be unimodular and $\mathrm{D}$ be a half line of $\V$ contained in $\mathrm{C}$ i.e. $\mathrm{D}=\R_{>0}v\subset\mathrm{C}$, for some $v\in\mathrm{C}$, and let $\G_{\mathrm{D}}$ be the stabilizer of $\mathrm{D}$ in $\G$. With these notation, we get the following lemma:
\begin{lemma}
 Let $\Delta$ be the set of elements $g\in\G$ such that $g(\mathrm{D})\subset\mathrm{C}$. Then $\Delta$ is open, stable under right multiplication by $\G_{\mathrm{D}}$, and that the composite map $\Delta\longrightarrow\G\longrightarrow\W\backslash\G$ is injective, where $\W\backslash\G$ denotes the homogeneous space of right cosets of $\G$ with respect to $\W$.
\end{lemma}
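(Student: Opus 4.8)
The plan is to dispatch the three assertions in order: openness and $\G_{\mathrm{D}}$-stability are formal, while injectivity will come from Tits' Theorem \ref{t5}.

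First I would record that $\mathrm{C}$ is a cone invariant under positive scaling, since $B(x,e_i)>0$ for all $i$ forces $B(\lambda x,e_i)>0$ for all $\lambda>0$. Writing $\mathrm{D}=\R_{>0}v$ for a fixed $v\in\mathrm{C}$, linearity of the action of $\G\subset\GL(\V)$ gives $g\mathrm{D}=\R_{>0}(gv)$, so that $g\mathrm{D}\subset\mathrm{C}$ if and only if $gv\in\mathrm{C}$. Hence $\Delta=\{g\in\G:gv\in\mathrm{C}\}$ is the preimage of the open set $\mathrm{C}$ (open since $S$ is finite) under the continuous orbit map $g\mapsto gv$ from $\G$ to $\V$, and is therefore open in $\G$.

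Next, for stability under right translation by $\G_{\mathrm{D}}$: if $g\in\Delta$ and $h\in\G_{\mathrm{D}}$, then $h\mathrm{D}=\mathrm{D}$, so $(gh)\mathrm{D}=g(h\mathrm{D})=g\mathrm{D}\subset\mathrm{C}$, i.e.\ $gh\in\Delta$.

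Finally, for injectivity of the composite $\Delta\to\G\to\W\backslash\G$, I would take $g_1,g_2\in\Delta$ lying in the same right coset $\W g_1=\W g_2$ and set $w:=g_2g_1^{-1}\in\W$. Then $g_2\mathrm{D}=w(g_1\mathrm{D})\subset w\mathrm{C}$, while $g_2\mathrm{D}\subset\mathrm{C}$ by hypothesis; since $g_2v\neq 0$ lies in $g_2\mathrm{D}$, the intersection $\mathrm{C}\cap w\mathrm{C}$ is non-empty, and Theorem \ref{t5} then yields $w=1$, hence $g_1=g_2$. The only step with genuine content is this last one, and the mild obstacle is simply recognizing that Tits' theorem is exactly the tool that converts $\mathrm{C}\cap w\mathrm{C}\neq\emptyset$ into $w=1$; everything else is formal.
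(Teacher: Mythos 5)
Your proposal is correct and follows essentially the same route as the paper: openness via the continuity of the orbit map $g\mapsto gv$ into the open cone $\mathrm{C}$, stability from $\G_{\mathrm{D}}$ preserving $\mathrm{D}$, and injectivity by producing a point of $\mathrm{C}\cap w\mathrm{C}$ and invoking Tits' Theorem \ref{t5}. The only cosmetic differences (using $w=g_2g_1^{-1}$ rather than $g_1g_2^{-1}$, and making explicit that $\mathrm{C}$ is a cone) do not change the argument.
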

\begin{proof}
 First, we show that $\Delta$ is open in $\G$. For, $\Delta=\{g\in\G| g(v)\in\mathrm{C}\}$, where $v\in\V$ such that $\mathrm{D}=\R_{>0}v\subset\mathrm{C}$. We define a map $f:\G\longrightarrow\V$ by $g\mapsto{g(v)}$. It is clear that $f$ is continuous and $\mathrm{C}$ is open in $\V$, hence $f^{-1}(\mathrm{C})=\Delta$ is open in $\G$.
 
Now we show that $\Delta$ is stable under right multiplication by $\G_{\mathrm{D}}$. For, let $h\in\G_{\mathrm{D}}$ and $g\in\Delta$. Then $$gh(v)=g(\alpha{v})=\alpha g(v)\in\mathrm{C},\ \mbox{for some}\ \alpha\in\R_{>0},$$ and this shows that $gh\in\Delta$.

Finally, we show that the composite map $\Delta\longrightarrow\G\longrightarrow\W\backslash\G$ is injective. For, let $g_1,g_2\in\Delta$ such that $\W{g_1}=\W{g_2}$ i.e. ${g_1g_2^{-1}}\in\W$. Since $g_2(\mathrm{D})\subset\mathrm{C}$, $\mathrm{D}\subset{g_2^{-1}(\mathrm{C})}$. That is, $g_1(\mathrm{D})\subset{g_1g_2^{-1}(\mathrm{C})}$. Also, $g_1(\mathrm{D})\subset{\mathrm{C}}$, therefore $g_1g_2^{-1}(\mathrm{C})\cap\mathrm{C}\neq\emptyset$. Hence by Theorem \ref{t5}, we get $ g_1g_2^{-1}=1$. This shows that the composite map $\Delta\longrightarrow\G\longrightarrow\W\backslash\G$ is injective.
\end{proof}

\begin{lemma}
 Let $\mu$ be a Haar measure on $\G$. If $\mu(\Delta)$ is finite, the subgroup $\G_{\mathrm{D}}$ is compact.
\end{lemma}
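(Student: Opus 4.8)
The plan is to prove the contrapositive: assuming $\G_{\mathrm{D}}$ is non-compact, I will produce infinitely many pairwise disjoint open translates sitting inside $\Delta$, which forces $\mu(\Delta)=\infty$. Before that I would record two preliminary facts. Since $\mathrm{D}\subset\mathrm{C}$, every $h\in\G_{\mathrm{D}}$ satisfies $h(\mathrm{D})=\mathrm{D}\subset\mathrm{C}$, so $\G_{\mathrm{D}}\subseteq\Delta$; in particular $1\in\G_{\mathrm{D}}\subseteq\Delta$. Moreover $\G_{\mathrm{D}}$ is closed in $\G$: writing $\mathrm{D}=\R_{>0}v$, one has $\G_{\mathrm{D}}=\{g\in\G\mid g(v)\in\R_{\geq0}v\}$ (the inclusion ``$\subseteq$'' is clear, and ``$\supseteq$'' uses that every $g\in\G$ is invertible, so $g(v)\neq0$), and this is the preimage of a closed ray under the continuous orbit map $g\mapsto g(v)$. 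From the previous lemma, $\Delta$ is open and $\Delta\G_{\mathrm{D}}=\Delta$.

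Next, using local compactness of $\G$, I would fix a symmetric open neighbourhood $W$ of the identity with $W=W^{-1}$, with compact closure $\overline{W}$, and with $W\subseteq\Delta$ (possible since $\Delta$ is open and contains $1$). Then I attempt a greedy construction of elements $h_1,h_2,\ldots$ in $\G_{\mathrm{D}}$ with $h_i\notin\bigcup_{k<i}W^{2}h_k$ for every $i$. There are two cases. If the process terminates at some stage $n$, this means $\G_{\mathrm{D}}\subseteq\bigcup_{k=1}^{n}W^{2}h_k$; intersecting with $\G_{\mathrm{D}}$ and using $h_k\in\G_{\mathrm{D}}$ gives $\G_{\mathrm{D}}=\bigcup_{k=1}^{n}(W^{2}\cap\G_{\mathrm{D}})h_k$, which is contained in the compact set $\bigcup_{k=1}^{n}(\overline{W}\cdot\overline{W})h_k$, so the closed subgroup $\G_{\mathrm{D}}$ would be compact, contrary to assumption. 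Hence the process never terminates, and it yields an infinite sequence $(h_i)_{i\geq1}$ in $\G_{\mathrm{D}}$ with $h_i\notin\bigcup_{k<i}W^{2}h_k$ for all $i$.

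Finally I would check that the open sets $Wh_i$ are pairwise disjoint: if $w_1h_i=w_2h_j$ with $w_1,w_2\in W$ and $i>j$, then $h_i=w_1^{-1}w_2h_j\in W^{2}h_j$, contradicting the construction. Each $Wh_i$ lies in $\Delta$, since $W\subseteq\Delta$ and $\Delta h_i\subseteq\Delta\G_{\mathrm{D}}=\Delta$. Because $\G$ is unimodular, $\mu(Wh_i)=\mu(W)$, and $\mu(W)>0$ as $W$ is a non-empty open set. Therefore $\mu(\Delta)\geq\sum_{i=1}^{\infty}\mu(Wh_i)=\infty$, contradicting the hypothesis $\mu(\Delta)<\infty$. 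Hence $\G_{\mathrm{D}}$ is compact.

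I expect the only point requiring genuine care to be the dichotomy in the second paragraph — arranging the greedy selection so that it either closes up into a compactness statement or runs forever — together with the bookkeeping of the symmetric neighbourhood $W$ and the use of unimodularity to keep $\mu(Wh_i)$ bounded away from $0$ (otherwise the modular function could make the translate measures summable). Everything else is routine point-set topology and countable additivity of Haar measure.
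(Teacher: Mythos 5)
Your proof is correct and takes essentially the same route as the paper: both arguments rest on the dichotomy that either finitely many translates of a small identity neighbourhood inside $\Delta$ cover $\G_{\mathrm{D}}$ (forcing compactness, since $\G_{\mathrm{D}}$ is closed), or else one obtains infinitely many pairwise disjoint right translates of equal positive Haar measure inside $\Delta$, contradicting $\mu(\Delta)<\infty$ via unimodularity. Your greedy $W^{2}$-separated selection and your explicit check that $\G_{\mathrm{D}}$ is closed merely spell out steps the paper states more briefly with its compact neighbourhood $\mathrm{K}$ and the covering $\G_{\mathrm{D}}\subset\cup_{i=1}^{r}\mathrm{K}^{-1}\mathrm{K}h_{i}$.
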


\begin{proof}
 Since $\Delta$ is an open set containing the identity element of $\G$ and the group $\G$ is locally compact, there exists a compact neighbourhood $\mathrm{K}$ of the identity element contained in $\Delta$. 
 
We now claim that there exist finitely many elements $h_i\in\G_{\mathrm{D}}$ such that every set of the form $\mathrm{K}h$, with $h\in\G_{\mathrm{D}}$, meets one of the $\mathrm{K}h_i$. For, suppose on the contrary that $\forall k\in\N$ and $\mathcal{H}_k=\{h_1, h_2,\ldots, h_k\}$ collection of elements in $\G_\mathrm{D}$, there exists $h_{k+1}\in\G_\mathrm{D}$ such that $\mathrm{K}h_{k+1}\cap\left(\cup_{i=1}^{k}\mathrm{K}h_{i}\right)=\emptyset$. It is also clear that $\mathrm{K}h_i\cap\mathrm{K}h_j=\emptyset,\ \forall i\neq{j}$. 

Since $\Delta$ is stable under right multiplication by any element of $\G_{\mathrm{D}}$, we get $\mathrm{K}h\subset\Delta,\ \forall h\in\G_{\mathrm{D}}$. Hence 
 $$\mu(\Delta)\geq\mu(\cup_{i=1}^{\infty}\mathrm{K}h_i)=\sum_{i=1}^{\infty}\mu(\mathrm{K}h_i)=\sum_{i=1}^{\infty}\mu(\mathrm{K})=\infty $$
 (since $\G$ is unimodular and $\mathrm{K}$ contains an open subset of $\G$, $\mu(\mathrm{K})>0$), which is a contradiction to the given hypothesis.  Therefore, $\exists$ $\mathcal{H}_r=\{h_1,h_2,\ldots,h_r\}$ a finite collection of elements in $\G_{\mathrm{D}}$ such that $\forall h \in\G_{\mathrm{D}}, \ \mathrm{K}h\cap\mathcal{K}h_i\neq\emptyset, \mbox{for some}\ i\in\{1,2,\ldots,r\}$, which shows that $\G_{\mathrm{D}}\subset\cup_{i=1}^{r}\mathrm{K}^{-1}\mathrm{K}h_i$ and hence $\G_{\mathrm{D}}$ is compact (since $\G_{\mathrm{D}}$ is a closed subset of $\G$ and  $\cup_{i=1}^{r}\mathrm{K}^{-1}\mathrm{K}h_i$ is compact).
 \end{proof}
 
\begin{lemma}\label{l1}
 Let $\nu$ be a non-zero positive measure on $\W\backslash\G$, invariant under $\G$. If $\nu(\W\backslash\G)<\infty$, then $\G_{\mathrm{D}}$ is compact.
\end{lemma}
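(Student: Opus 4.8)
The plan is to deduce Lemma \ref{l1} from the preceding lemma, the point being to pass from finiteness of $\nu$ on $\W\backslash\G$ to finiteness of $\mu(\Delta)$ for a Haar measure $\mu$ on $\G$. Since $\G$ is unimodular — this is the standing hypothesis on $\G$ in this section, and it holds for $\G=\mathrm{O}(B)(\R)$, the real points of a reductive algebraic group — and $\W$ is discrete in $\G$, a choice of Haar measure $\mu$ on $\G$ determines a $\G$-invariant positive measure on $\W\backslash\G$. Such a $\G$-invariant measure on $\W\backslash\G$ is unique up to a positive scalar, so we may normalize $\mu$ so that the induced measure on $\W\backslash\G$ is exactly $\nu$. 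Writing $p\colon\G\to\W\backslash\G$ for the canonical projection, the standard relation between a Haar measure and the induced quotient measure then reads: $\mu(E)=\nu\big(p(E)\big)$ for every Borel set $E\subseteq\G$ on which $p$ is injective.

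The next step is to apply this with $E=\Delta$. By the first lemma of Section \ref{s2}, the composite $\Delta\to\G\to\W\backslash\G$ is injective, i.e.\ $p$ is injective on $\Delta$; equivalently, the left translates $w\Delta$, $w\in\W$, are pairwise disjoint. (This equivalence is immediate: if $w_1g_1=w_2g_2$ with $g_1,g_2\in\Delta$ and $w_1\neq w_2$, then $g_1g_2^{-1}\in\W\setminus\{1\}$ while $p(g_1)=p(g_2)$, contradicting injectivity; the converse is similar.) One sees the displayed relation by choosing a Borel fundamental domain $F$ for the left translation action of $\W$ on $\G$ with $\Delta\subseteq F$: such an $F$ exists because $\Omega:=\bigcup_{w\in\W}w\Delta$ is a Borel $\W$-stable subset of $\G$ on which $\Delta$ is already a fundamental domain, and one completes it using any Borel fundamental domain for $\W$ acting freely on the Polish space $\G\setminus\Omega$. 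Then $p|_F$ is a Borel isomorphism onto $\W\backslash\G$, so
\[
\mu(\Delta)\ \le\ \mu(F)\ =\ \nu\big(p(F)\big)\ =\ \nu(\W\backslash\G)\ <\ \infty .
\]

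With $\mu(\Delta)<\infty$ in hand, the preceding lemma applies directly and gives that $\G_{\mathrm{D}}$ is compact, which is exactly the assertion of Lemma \ref{l1}. I do not anticipate a genuine obstacle here: the geometric core — injectivity of $\Delta\to\W\backslash\G$, which rests on Tits's Theorem \ref{t5} — is already established in this section, and everything else is the classical descent of Haar measure to a quotient by a discrete subgroup. The only points that need a little care are recording why $\mathrm{O}(B)(\R)$ is unimodular, and invoking both the existence of Borel fundamental domains for free actions of countable discrete groups and the compatibility of $\mu$ with the induced measure on $\W\backslash\G$; all of this is routine.
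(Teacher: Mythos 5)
Your argument is correct in substance and reaches the same pivot as the paper --- namely $\mu(\Delta)\le\nu(\W\backslash\G)$, after which the preceding lemma finishes the job --- but it implements this differently. The paper invokes the Weil integration formula (its displayed relation between $\mu$, the counting measure on $\W$, and $\nu$), tests it against functions $f\prec\Delta$, and bounds the inner integral by showing $\#(\Delta g^{-1}\cap\W)\le 1$ directly from Tits's Theorem \ref{t5}; note this counting bound is just a restatement of the injectivity lemma you cite, so the geometric input is identical. You instead normalize $\mu$ against $\nu$ via uniqueness of the invariant measure on $\W\backslash\G$ and compare $\Delta$ with a Borel fundamental domain $F\supseteq\Delta$. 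What your route buys is the avoidance of the integration formula and the sup over $f\prec\Delta$ (it is arguably more transparent measure-theoretically); what it costs is the extra apparatus of Borel fundamental domains and the measurability of $p(F)$ (Lusin--Souslin), which the paper's functional-analytic route sidesteps. One justification in your write-up should be repaired: it is not true in general that a free Borel action of a countable group on a Polish space admits a Borel fundamental domain (the $E_0$-type relations give counterexamples), so "any Borel fundamental domain for $\W$ acting freely on $\G\setminus\Omega$" is not a licit appeal as stated. What is true, and all you need, is the standard fact that a discrete subgroup $\W$ of a locally compact second countable group $\G$ admits a Borel fundamental domain $D$ for left translation (choose a neighbourhood $U$ of $1$ with $U^{-1}U\cap\W=\{1\}$ and disjointify a countable cover by right translates of $U$); then take $F=\Delta\cup(D\setminus\Omega)$ with $\Omega=\bigcup_{w\in\W}w\Delta$, which is Borel and $\W$-invariant since $\W$ is countable and the translates $w\Delta$ are pairwise disjoint by the injectivity lemma. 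With that substitution your proof is complete.
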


\begin{proof}
Recall that $\G$ is unimodular with a Haar measure $\mu$ and $\nu$ is a non-zero positive measure on $\W\backslash\G$, invariant under $\G$. Let $\nu'$ be a Haar measure on $\W$. Since $\W$ is a discrete subgroup of $\GL(V)$, $\nu'$ is actually the counting measure (up to a scalar multiple) on $\W$. We prove here that $\mu(\Delta)<\infty$, which proves that $\G_{\mathrm{D}}$ is compact, using the last lemma.

We have a relation in $\mu, \nu$ and $\nu'$ as
  \begin{equation}\label{e1}
   \int_{\G}f\,\mathrm{d}\mu= \int_{\W\backslash\G}\left(\int_{\W}f(wg)\,\mathrm{d}\nu'(w)\right)\,\mathrm{d}\nu(\W{g}),\ \forall f\in\mathcal{C}_{c}(\G)
  \end{equation}
where $\mathcal{C}_{c}(\G)$ is the space of all compactly supported continuous functions on $\G$.

Let the symbol $f\prec\Delta$ means that $f\in\mathcal{C}_{c}(\G)$ with $0\leq{f}\leq{1}$ and the support of $f$ is contained in $\Delta$.  Since $\Delta$ is open in $\G$, we get
\begin{equation}\label{e2}
 \mu(\Delta)=\mbox{sup}\left\{\int_{\G}f\,\mathrm{d}\mu: f\prec\Delta\right\}.
\end{equation}
Let $f\prec\Delta$. By (\ref{e1}), we get
\begin{align}\label{e3}
   \int_{\G}f\,\mathrm{d}\mu &= \int_{\W\backslash\G}\left(\int_{\W}f(wg)\,\mathrm{d}\nu'(w)\right)\,\mathrm{d}\nu(\W{g})\nonumber\\
&\leq \int_{\W\backslash\G}\left(\int_{\W}\chi_{\Delta}(wg)\,\mathrm{d}\nu'(w)\right)\,\mathrm{d}\nu(\W{g})
  \end{align}
where $\chi_{\Delta}$ is the characteristic function of $\Delta$. 

Since $wg\in\Delta\Leftrightarrow w\in\Delta g^{-1}$, we get 
\begin{align}\label{e4}
 \int_{\W\backslash\G}\left(\int_{\W}\chi_{\Delta}(wg)\,\mathrm{d}\nu'(w)\right)\,\mathrm{d}\nu(\W{g}) &=\int_{\W\backslash\G}\nu'(\Delta g^{-1}\cap\W)\,\mathrm{d}\nu(\W g)\nonumber\\
&=\int_{\W\backslash\G}\#(\Delta g^{-1}\cap\W)\,\mathrm{d}\nu(\W g)
\end{align}
where $\#(\Delta g^{-1}\cap\W)$ denotes the number of elements in the set $\Delta g^{-1}\cap\W$.

Since $x\in\Delta g^{-1}\cap\W\Leftrightarrow xg\in\Delta$  and $x\in\W$, we get $$xg(\mathrm{D})\subset\mathrm{C}\ \mbox{i.e.}\ xg(v)\in\mathrm{C},\ \forall x\in\Delta g^{-1}\cap\W\ (\because\ \mathrm{D}=\R_{>0}v).$$ 
Now we claim that $\#(\Delta g^{-1}\cap\W)\leq 1$. For, let $x_1, x_2\in\Delta g^{-1}\cap\W$. Then, we get the following: $$x_1g(v)=c_1\in\mathrm{C} \quad \mbox{and}\quad x_2g(v)=c_2\in\mathrm{C}$$
\begin{align*}
 &\Rightarrow x_2 x_1^{-1}(c_1)=x_2x_1^{-1}(x_1(gv))=x_2(gv)=c_2\\
&\Rightarrow x_2x_1^{-1}(\mathrm{C})\cap\mathrm{C}\neq\emptyset \\
&\Rightarrow x_2x_1^{-1}=1 \qquad (\mbox{by Theorem \ref{t5}})\\
&\Rightarrow x_2 = x_1.
\end{align*}
Therefore, $\#(\Delta g^{-1}\cap\W)\leq 1$, and we get
\begin{align}\label{e5}
 \int_{\W\backslash\G}\#(\Delta g^{-1}\cap\W)\,\mathrm{d}\nu(\W g) &\leq\int_{\W\backslash\G}\,\mathrm{d}\nu(\W g)\nonumber\\
&=\nu(\W\backslash\G).
\end{align}
By (\ref{e3}), (\ref{e4}) and (\ref{e5}), we get $$\int_{\G}f\,\mathrm{d}\mu\leq\nu(\W\backslash\G).$$ As $f\prec\Delta$ was chosen arbitrarily, we get $\mu(\Delta)\leq\nu(\W\backslash\G)$ (by using (\ref{e2})), and hence $\mu(\Delta)<\infty$.
\end{proof}
Now we prove Theorem \ref{t3} using the above lemmas. We have $B$, a non-degenerate bilinear form on $\V$. Let $\G$ be the group of real points of the orthogonal group of $B$ and $\mu$ be a Haar measure  on $\G$. It is clear that the group $\G$ is unimodular and contains $\W$.  Since $\W$ is infinite, the bilinear form $B$ is not positive definite and it has the signature $(p,q)$, where $p+q=n$ and $p,q\geq 1$. We prove few more lemmas to prove Theorem \ref{t3}.
\begin{lemma}
 $B(v,v)\neq 0$, for some $v\in\mathrm{C}$.
\end{lemma}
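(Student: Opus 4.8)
The plan is to use nothing more than the facts that $\mathrm{C}$ is a non-empty open convex cone and that $B$ is non-degenerate.

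First I would check that $\mathrm{C}\neq\emptyset$. Since $B$ is non-degenerate and $\{e_1,\dots,e_n\}$ is a basis of $\V$, the linear map $x\mapsto(B(x,e_1),\dots,B(x,e_n))$ from $\V$ to $\R^n$ is a bijection, so there is a vector $v_0\in\V$ with $B(v_0,e_i)=1>0$ for all $i$; then $v_0\in\mathrm{C}$. I would also record that $\mathrm{C}=\cap_{i=1}^n\{x\in\V: B(x,e_i)>0\}$, being an intersection of open half-spaces through the origin, is an open convex cone, and that a non-empty open subset of $\V$ necessarily spans $\V$.

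Then I would argue by contradiction: suppose $B(v,v)=0$ for every $v\in\mathrm{C}$. For $v,w\in\mathrm{C}$, convexity gives $v+w\in\mathrm{C}$, hence $0=B(v+w,v+w)=B(v,v)+2B(v,w)+B(w,w)=2B(v,w)$ by bilinearity and symmetry, so $B$ vanishes identically on $\mathrm{C}\times\mathrm{C}$. Since $\mathrm{C}$ spans $\V$, this forces $B\equiv 0$ on $\V\times\V$, contradicting non-degeneracy (indeed already $B(e_i,e_i)=-\cos(\pi)=1\neq 0$). Therefore $B(v,v)\neq 0$ for some $v\in\mathrm{C}$.

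There is essentially no obstacle here; the only place a hypothesis is genuinely used is the appeal to non-degeneracy to produce a point of $\mathrm{C}$. An alternative ending avoids convexity altogether: $v\mapsto B(v,v)$ is a polynomial function on $\V\cong\R^n$, and a polynomial vanishing on the non-empty open set $\mathrm{C}$ must be identically zero, so $B\equiv 0$, again contradicting non-degeneracy.
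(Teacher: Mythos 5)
Your proof is correct and follows essentially the same route as the paper: assume $B(v,v)=0$ on $\mathrm{C}$, polarize using the fact that $u+v\in\mathrm{C}$ for $u,v\in\mathrm{C}$ to get $B\equiv 0$ on $\mathrm{C}\times\mathrm{C}$, and conclude $B\equiv 0$ on $\V$ since $\mathrm{C}$ spans $\V$ (the paper extracts a basis from $\mathrm{C}-v$ instead, which amounts to the same thing), contradicting non-degeneracy. Your explicit verification that $\mathrm{C}\neq\emptyset$ is a small but welcome addition that the paper leaves implicit.
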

\begin{proof}
  Since for any $v\in\mathrm{C}, \mathrm{C}-v$ is an open subset of $\V$ containing the origin 0 (since $\mathrm{C}$ is an open subset of $\V$), $\V$ is generated by $\mathrm{C}-v$ (as an abelian group). In particular, $\mathrm{C}-v$ generates $\V$ as a vector space over $\R$, therefore there exists $\{v_1-v,v_2-v,\ldots,v_n-v\}$ a basis of $\V$ over $\R$ contained in $\mathrm{C}-v$, where $v_i\in\mathrm{C}, \forall\ 1\leq i\leq n$. Now if possible, let $B(v,v)=0,\ \forall v\in\mathrm{C}$.
\begin{align}\label{e6}
\Rightarrow B(u,v)&=\frac{1}{2}(B(u+v,u+v)-B(u,u)-B(v,v))\nonumber\\
&=0\qquad \forall u,v\in\mathrm{C}\quad(\because\ \forall u,v\in\mathrm{C},\ u+v\in\mathrm{C}). \end{align}
Now we show that if $B(v,v)=0,\ \forall v\in\mathrm{C}$, then $B\equiv 0$, which gives a contradiction (since $B$ is non-zero). Since $v_i, v\in\mathrm{C}$, using the bilinearity of $B$ and (\ref{e6}), we get 
$$B(v_i-v,v_j-v)=0,\ \forall\ 1\leq i,j\leq n$$ i.e. $B\equiv 0$. Therefore $\exists v\in\mathrm{C}$ such that $B(v,v)\neq 0$.
\end{proof}
 Let $v\in\mathrm{C}$ be an element for which $B(v,v)\neq 0$. Let $\mathrm{L}_v=\{u\in\V| B(u,v)=0\}$. Since $B(v,v)\neq 0$, $V=\R v\oplus\mathrm{L}_v$. Now take $\mathrm{D}=\R_{>0}v\subset\mathrm{C}$, a half line contained in $\mathrm{C}$. We have a basis $\{v,u_1,u_2,\ldots,u_{n-1}\}$ of $\V$ over $\R$, where $\{u_1,u_2,\ldots,u_{n-1}\}$ is a basis of $\mathrm{L}_v$ over $\R$. With respect to this basis of $\V$, $B=B_1\oplus B_2$, where $B_1=B|_{\R v}$ and $B_2=B|_{\mathrm{L}_v}$. The symmetric matrix associated to the bilinear form $B$, with respect to this basis, is of the form
 $$B=\begin{pmatrix}
 B_1(v,v) &0 &0 &\ldots &0\\
0 \\
0 &B_2\\
\vdots \\
0
\end{pmatrix}.$$

The group $\G=\mathrm{O}(B)(\R)\leq\GL(n,\R)$, is unimodular with a Haar measure $\mu$ and it contains the Coxeter group $\W$ as a discrete subgroup. Let $\nu$ be a $\G$-invariant measure on the quotient $\W\backslash\G$ such that $\nu\left(\W\backslash\G\right)<\infty$ i.e. $\W$ is a lattice in $\G$.

 Let $\mathrm{H}=\mathrm{O}(B_2)(\R)\leq\GL(\mathrm{L}_v)$ be the orthogonal group of the bilinear form $B_2$ on $\mathrm{L}_v$. It is clear that $$\G'=\left\{\begin{pmatrix}
                                                                                                                                                                   1 &0 &\ldots &0\\
0 \\
\vdots &\quad &h\\
0                                                                                                                                                                  \end{pmatrix}: h\in\mathrm{H}\right\}$$ is a closed subgroup of $\G$ and $\forall g\in\G',\ g(v)=v$ i.e. $\G'$ is a closed subgroup of $\G_{\mathrm{D}}$, therefore it is compact (by Lemma \ref{l1}). 

Also, $\G'$ is isomorphic (as a Lie group) to $\mathrm{H}=\mathrm{O}(B_2)(\R)$, therefore $\mathrm{H}$ is a compact subgroup of $\GL(\mathrm{L}_v)$. It shows that the bilinear form $B_2$ is either positive definite or negative definite. Since the group $\W$ is infinite, the bilinear form $B$ can not be positive or negative definite. Therefore $B$ has the signature $(n-1,1)\ \mbox{or}\ (1,n-1)$.

Now we show that $B$ can not have the signature $(1,n-1)$.
\begin{lemma}
 If there is a relation $(s_is_j)^{m_{i,j}}=1$, for some $i\neq j$ and $2\leq m_{i,j}<\infty$ in the generators of the Coxeter group $\W$ and the bilinear form $B$ as above, then $B$ has the signature $(n-1,1)$.
\end{lemma}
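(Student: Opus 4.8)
The plan is to exhibit an explicit two-dimensional subspace of $\V$ on which $B$ is positive definite. Since we have already shown that $B$ is non-degenerate with signature $(n-1,1)$ or $(1,n-1)$, producing such a subspace forces the positive index of inertia to be at least $2$, which rules out the signature $(1,n-1)$ and leaves only $(n-1,1)$.

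Concretely, I would fix indices $i\neq j$ with $2\leq m_{i,j}<\infty$ and consider the subspace $\V_{i,j}=\R e_i\oplus\R e_j$. The Gram matrix of $B|_{\V_{i,j}}$ in the basis $\{e_i,e_j\}$ is
$$\begin{pmatrix} 1 & -\cos(\pi/m_{i,j}) \\ -\cos(\pi/m_{i,j}) & 1 \end{pmatrix},$$
whose leading principal minors are $1$ and $1-\cos^2(\pi/m_{i,j})=\sin^2(\pi/m_{i,j})$. Because $2\leq m_{i,j}<\infty$ we have $0<\pi/m_{i,j}\leq\pi/2$, hence $\sin(\pi/m_{i,j})>0$ and $\cos(\pi/m_{i,j})<1$; both minors are strictly positive, so by Sylvester's criterion $B|_{\V_{i,j}}$ is positive definite. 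In particular $e_i,e_j$ are linearly independent and $\V_{i,j}$ is a genuine two-dimensional positive-definite subspace of $\V$.

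Next I would invoke the standard linear-algebra fact that if a non-degenerate symmetric bilinear form on an $n$-dimensional real vector space has signature $(p,q)$, then any subspace on which the form is positive definite has dimension at most $p$: writing $\V=\V_+\oplus\V_-$ with $B$ positive definite on $\V_+$ ($\dim\V_+=p$) and negative definite on $\V_-$ ($\dim\V_-=q$), such a subspace meets $\V_-$ only in $0$, so its dimension plus $q$ is at most $n$. Applying this to $\V_{i,j}$ gives $p\geq 2$. Since $B$ has signature $(n-1,1)$ or $(1,n-1)$ by the preceding lemmas, and the latter has $p=1<2$, we conclude that $B$ has signature $(n-1,1)$.

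There is essentially no serious obstacle here beyond bookkeeping; the one point worth keeping in view is that it is precisely the \emph{finiteness} of $m_{i,j}$ that is being used, since $m_{i,j}=\infty$ would give the $2\times 2$ block $\begin{pmatrix}1&-1\\-1&1\end{pmatrix}$, which is only positive semidefinite and would not produce a positive-definite $2$-plane. The hypothesis also implicitly guarantees $n\geq 2$, so the statement is non-vacuous, and under the running assumption that $\W$ is infinite one has $n\geq 3$, making the two signatures $(n-1,1)$ and $(1,n-1)$ genuinely distinct.
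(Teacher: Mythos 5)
Your proposal is correct and follows essentially the same route as the paper: both exhibit $\V_{i,j}=\R e_i\oplus\R e_j$ as a two-dimensional subspace on which $B$ is positive definite (you via Sylvester's criterion, the paper via a direct estimate of $B(\lambda e_i+\delta e_j,\lambda e_i+\delta e_j)$) and then conclude $p\geq 2$, which together with the previously established dichotomy $(n-1,1)$ or $(1,n-1)$ forces the signature $(n-1,1)$. The only cosmetic difference is that the paper deduces $p\geq 2$ from the orthogonal decomposition $\V=\V_{i,j}\oplus\V_{i,j}^{\perp}$ and the resulting block form of $B$, whereas you use the standard dimension bound for positive-definite subspaces; both are sound.
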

\begin{proof}
 For $2\leq m_{i,j}<\infty$, $B(e_i,e_j)=-\mbox{cos}\left(\frac{\pi}{m_{i,j}}\right)>-1$, and hence
\begin{align*}
 B(\lambda e_i+\delta e_j,\lambda e_i+\delta e_j)&=\lambda^2 B(e_i,e_i)+\delta^2 B(e_j,e_j)+2\lambda\delta B(e_i,e_j)\\
&=\lambda^2+\delta^2+2\lambda\delta B(e_i,e_j)\\
&>\lambda^2+\delta^2-2\lambda\delta=(\lambda-\delta)^2\geq 0
\end{align*}
(since $B(e_i,e_j)>-1$).
Therefore $\forall \lambda,\delta\in\R$ and $(\lambda,\delta)\neq (0,0), B(\lambda e_i+\delta e_j,\lambda e_i+\delta e_j)>0$. 

Let $\V_{i,j}=\R e_i\oplus\R e_j$ be a subspace of $\V$. The restriction of the bilinear form $B$ on $\V_{i,j}$ is non-degenerate and positive definite. Therefore $\V=\V_{i,j}\oplus\V_{i,j}^{\perp}$, and with respect to a basis of $\V$ which is the union of a basis of $\V_{i,j}$ and a basis of $\V_{i,j}^{\perp}$, the matrix of the bilinear form $B$ is 
$$\begin{pmatrix}
    1 & 0 & 0 & \ldots & 0\\
0 & 1 & 0 & \ldots & 0\\
0 & 0 \\
\vdots & \vdots & \quad & B|_{\V_{i,j}^{\perp}}\\
0 & 0
   \end{pmatrix}$$
and $B|_{\V_{i,j}^{\perp}}$ is non-degenerate.

 The above matrix form of the bilinear form $B$ shows that its signature is $(p,q)$, where $p,q\in\N,\ p+q=n, \mbox{and} \ p\geq 2$. Therefore the possibility for the signature of $B$ to be $(1,n-1)$ is excluded i.e. $B$ has the signature $(n-1,1)$.
\end{proof}
\begin{lemma}
 If $(s_is_j)^{\infty}=1$, for $i\neq j$ and $s_is_i=1$, $\forall i,j\in\{1,2,\ldots,n\}$ are the only relations in the generators of the Coxeter group $\W$ and the bilinear form $B$ as above, then $B$ has the signature $(n-1,1)$.
\end{lemma}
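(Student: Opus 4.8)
The plan is to write down the Gram matrix of $B$ in the basis $\{e_1,\dots,e_n\}$ and diagonalise it by inspection. Under the present hypothesis $m_{i,j}=\infty$ for all $i\neq j$, so $B(e_i,e_i)=1$ and $B(e_i,e_j)=-1$ for $i\neq j$ (the value $-\cos(\pi/m_{i,j})$ being interpreted as $-1$ when $m_{i,j}=\infty$); hence the matrix of $B$ is $M:=2I_n-J_n$, where $I_n$ is the identity and $J_n$ the $n\times n$ all-ones matrix. Since $J_n$ acts as multiplication by $n$ on the line spanned by $(1,1,\dots,1)^t$ and as $0$ on the hyperplane $\{x:\sum_i x_i=0\}$, the matrix $M$ has eigenvalue $2-n$ with multiplicity $1$ and eigenvalue $2$ with multiplicity $n-1$.

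The first step is to observe that the standing non-degeneracy assumption on $B$ forces $2-n\neq 0$, i.e. $n\geq 3$; equivalently $\det M=2^{n-1}(2-n)\neq 0$. (For $n=2$ one has $M=\begin{pmatrix}1&-1\\-1&1\end{pmatrix}$, the degenerate Tits form of the infinite dihedral group, which is excluded by hypothesis.) Given $n\geq 3$ we get $2-n<0<2$, so $M$ has exactly one negative eigenvalue and $n-1$ positive ones; therefore $B$ has signature $(n-1,1)$, as claimed.

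Alternatively, to keep the argument parallel to the preceding lemma, one can instead invoke the dichotomy already established — that $B$ has signature $(n-1,1)$ or $(1,n-1)$ — and then rule out $(1,n-1)$ by exhibiting a $2$-dimensional $B$-positive-definite subspace of $\V$, which exists because $n\geq 3$: for instance the vectors $e_1-e_2$ and $e_1+e_2-2e_3$ are $B$-orthogonal and satisfy $B(e_1-e_2,e_1-e_2)=4$ and $B(e_1+e_2-2e_3,e_1+e_2-2e_3)=12$, so the positive index of inertia of $B$ is at least $2$. Either route finishes the proof, and hence completes the case analysis leading to Theorem \ref{t3}.

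I do not anticipate any real obstacle here: the entire content is the elementary spectral analysis of $2I_n-J_n$. The only point deserving explicit mention is the degenerate case $n=2$, where the hypothesis clashes with the non-degeneracy assumption and must be dispatched by hand; beyond that, everything reduces to a one-line eigenvalue computation (or, in the second approach, to verifying $B$-orthogonality and positivity of two concrete vectors).
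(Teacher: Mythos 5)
Your proof is correct, and your primary argument takes a genuinely different (and in fact more self-contained) route than the paper's. The paper argues locally: it first notes, exactly as you do, that non-degeneracy rules out $n=2$, then restricts $B$ to the three-dimensional subspace $\R e_1\oplus\R e_2\oplus\R e_3$ spanned by a ``triangle'' of generators, computes that this $3\times 3$ Gram matrix has eigenvalues $2,2,-1$, hence signature $(2,1)$, and concludes only that the positive index of inertia of $B$ is at least $2$; the final step ``therefore the signature is $(n-1,1)$'' relies on the dichotomy (signature $(n-1,1)$ or $(1,n-1)$) established earlier in the proof of Theorem \ref{t3} under the lattice hypothesis. Your spectral analysis of the full Gram matrix $M=2I_n-J_n$ (eigenvalues $2$ with multiplicity $n-1$ and $2-n$ with multiplicity $1$, negative once $n\geq 3$) pins down the signature as exactly $(n-1,1)$ in one stroke, with no appeal to that dichotomy, so the lemma becomes a purely linear-algebraic fact about the universal Coxeter form; what you give up is only the structural parallelism with the preceding lemma, which your second, alternative argument (two explicit orthogonal $B$-positive vectors forcing positive index $\geq 2$, then the dichotomy) restores --- that alternative is essentially the paper's proof with explicit vectors in place of the $3\times 3$ eigenvalue computation. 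Both of your computations check out, including $\det M=2^{n-1}(2-n)$ and the values $B(e_1-e_2,e_1-e_2)=4$, $B(e_1+e_2-2e_3,e_1+e_2-2e_3)=12$ with the two vectors $B$-orthogonal.
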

\begin{proof}
These relations mean that all the vertices in the Coxeter graph of the Coxeter group $\W$ are joined by an edge of weight $\infty$, and $B(e_i,e_i)=1$, and $B(e_i,e_j)=-1$, for $i\neq j$. These relations are not possible in a Coxeter group $\W$ with 2 generators ($\because$ $B$ is non-degenerate), therefore to have the possibility stated in the statement of the lemma, $n$ must be $\geq 3$. 

Since all the vertices are joined by an edge in the Coxeter graph, the Coxeter graph  contains a triangle. Let $s_1, s_2$ and $s_3$ be any three vertices which are joined to each other to form a triangle. Let $\V_1=\R e_1\oplus\R e_2\oplus\R e_3$ be a subspace of $\V$, and $B_1=B|_{\V_1}$ be a bilinear form on $\V_1$. Now we show that $B_1$ has the signature $(2,1)$, which shows that $\V=\V_1\oplus\V_1^{\perp}$ and hence the signature of $B$ is $(p,q)$ with $p\geq 2$. 

The matrix form of $B_1$ with respect to the basis $\{e_1,e_2,e_3\}$ of $\V_1$ over $\R$ is
$$B_1=\begin{pmatrix}
       1 & -1 & -1\\
-1 & 1 & -1\\
-1 & -1 & 1
      \end{pmatrix}.$$ One can check easily that $2, 2, -1$ are the eigenvalues of the matrix $B_1$. 
      
Since a symmetric matrix is orthogonally diagonalizable, the signature of the bilinear form $B_1$ is $(2,1)$. It shows that the possibility for the signature of the bilinear form $B$ to be $(1,n-1)$ is excluded. Therefore the signature of the bilinear form $B$ is $(n-1,1)$.
\end{proof}

Since we had $\V=\R v\oplus\mathrm{L}_v$, where $v\in\mathrm{C}$ is an element for which $B(v,v)\neq 0$, and $\mathrm{L}_v=\{u\in\V| B(u,v)=0\}$, the condition on the signature of $B$ forces $B(v,v)<0$ (since $B|_{\mathrm{L}_v}$ is positive definite and $B$ is non-degenerate and non-positive). The above proof also shows that if $B(u,u)\neq 0$, then $B(u,u)<0$,  for any $u\in\mathrm{C}$. 

Now we show that $B(u,u)\neq 0$, for any $u\in\mathrm{C}$. Otherwise, $\exists{u}\in\mathrm{C}$ such that $B(u,u)=0$. Since the bilinear form $B$ is non-degenerate, $\exists{u'}\in\V$ such that $B(u',u')=0$ and $B(u,u')=1$ (see \cite[Theorem 6.10]{Jac}). Also, for any $\alpha,\beta>0$ in $\R$, $B(\alpha{u}+\beta{u'},\alpha{u}+\beta{u'})=2\alpha\beta>0$. Since $u\in\mathrm{C}$, and $\mathrm{C}$ is open in $\V$, $\exists\alpha,\beta>0$ in $\R$ such that $\alpha{u}+\beta{u'}\in\mathrm{C}$ and $B(\alpha{u}+\beta{u'},\alpha{u}+\beta{u'})=2\alpha\beta>0$, which is a contradiction. Therefore $B(u,u)\neq0$, $\forall u\in\mathrm{C}$. Hence $B(u,u)<0$, $\forall u\in\mathrm{C}$; and it completes the proof of Theorem \ref{t3}. \qed

\section{Proof of Theorem \ref{t1}}

Let $\mathrm{O}(B)$ be the orthogonal group  of the bilinear form $B$ and $\mathrm{O}(p,q)$ be the group of real points of the group $\mathrm{O}(B)$ i.e. $\mathrm{O}(p,q)=\mathrm{O}(B)(\R),$ where $(p,q)$ is the signature of $B$ with $p,q\geq 1$, and $ p+q=n$. Let $\mathrm{SO}(B)$ be the connected component of the identity element of $\mathrm{O}(B)$, and $\mathrm{SO}(p,q)=\mathrm{SO}(B)(\R).$ The subgroup $\mathrm{SO}(p,q)$ has finite index (four) in the group $\mathrm{O}(p,q),$ therefore any finite index subgroup $\mathrm{L}'$ of the Coxeter group $\W$ contains a finite index subgroup $\mathrm{L}\leq\mathrm{SO}(p,q)$, namely $\mathrm{L}=\mathrm{L}'\cap\mathrm{SO}(p,q)$. If $\mathrm{L}'$ is isomorphic to an irreducible lattice $\Gamma'$ in a semisimple group $\mathrm{H}$ of $\R$-rank $\geq2$, then $\mathrm{L}$ will be isomorphic to a finite index subgroup $\Gamma$ of $\Gamma'$. Also, it can be shown easily that a finite index subgroup $\Gamma$ of an irreducible lattice $\Gamma'$ is an irreducible lattice in 
$\mathrm{H}$. We prove some lemmas which will be used in the proof of Theorem \ref{t1}. 

\begin{lemma}\label{l4}
 There exists a connected semisimple adjoint group $\tilde{\G}$ and an (central) isogeny $\pi:\mathrm{SO}(B)\longrightarrow\tilde{\G}$. 
\end{lemma}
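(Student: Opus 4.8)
The plan is to exhibit $\tilde{\G}$ as the adjoint group $\mathrm{SO}(B)/Z$, where $Z$ denotes the (scheme-theoretic) centre of $\mathrm{SO}(B)$, and to verify that the natural quotient map is a central isogeny. First I would recall the standard structure theory of special orthogonal groups: $\mathrm{SO}(B)$ is a connected semisimple linear algebraic group defined over $\R$ (here non-degeneracy of $B$, hypothesized throughout, is essential, and $n\geq 3$ so that the group is genuinely semisimple rather than a torus). Its centre $Z$ is finite — it is $\{\pm I\}$ when $n$ is even (intersected with $\mathrm{SO}$, so in fact $\{\pm I\}$ since $\det(-I)=1$ for $n$ even) and trivial when $n$ is odd — and in particular $Z$ is a finite normal subgroup lying in every maximal torus, hence central in the strong sense required.

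Next I would form the quotient $\tilde{\G}:=\mathrm{SO}(B)/Z$ in the category of algebraic groups; this exists and is again a connected semisimple algebraic group, and by construction it has trivial centre, i.e. it is adjoint. The quotient morphism $\pi:\mathrm{SO}(B)\longrightarrow\tilde{\G}$ is surjective with finite central kernel $Z$, which is precisely the definition of a central isogeny. One should also note that this construction is defined over $\R$ (the centre of a group defined over $\R$ is defined over $\R$, and quotients by normal closed subgroups defined over $\R$ are defined over $\R$), so that $\tilde{\G}$ and $\pi$ make sense as $\R$-group objects, matching the setting in which Theorem \ref{t4} will be applied. Alternatively, and perhaps more concretely, one can take $\tilde{\G}$ to be the image of $\mathrm{SO}(B)$ under its adjoint representation $\mathrm{Ad}:\mathrm{SO}(B)\longrightarrow\GL(\mathfrak{so}(B))$ on its Lie algebra; the kernel of $\mathrm{Ad}$ is exactly the centre $Z$, so $\mathrm{Ad}$ factors through an isomorphism $\mathrm{SO}(B)/Z\xrightarrow{\sim}\tilde{\G}$, and $\mathrm{Ad}$ itself is then the desired central isogeny onto its (closed) image.

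There is essentially no serious obstacle here; this lemma is a packaging of standard facts, and the only point requiring a word of care is to make sure the kernel is genuinely central and finite (so that $\pi$ is an \emph{isogeny}, not merely a quotient map) — this is where one uses that $\mathrm{SO}(B)$ is semisimple so that its centre is finite. I would therefore keep the proof short: cite the standard classification of centres of classical groups (or the fact that a connected semisimple group has finite centre), invoke the existence of quotients of algebraic groups by closed normal subgroups, and conclude that $\pi:\mathrm{SO}(B)\to\mathrm{SO}(B)/\mathrm{Z}=\tilde{\G}$ is a central isogeny onto a connected semisimple adjoint group, all defined over $\R$.
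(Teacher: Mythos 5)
Your construction is correct and is exactly the standard fact the paper invokes: the paper gives no argument of its own but simply cites \cite[Theorem 2.6]{P-R} for the existence of the adjoint group and the central isogeny, which is precisely your quotient-by-the-centre (equivalently, adjoint representation) construction. Your added remark identifying the centre of $\mathrm{SO}(B)$ also matches the paper's follow-up observation that the kernel of $\pi$ is $\{\pm\mathrm{I}\}$, so nothing is missing; the only caveat worth keeping is your note that $n\geq 3$ is needed for $\mathrm{SO}(B)$ to be semisimple, a point the paper leaves implicit.
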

For a proof, see  \cite[Theorem 2.6]{P-R}.

In fact, $\tilde{\G}$ is an $\R$-simple group (since the group $\mathrm{SO}(B)$ has maximal normal subgroup $\{\pm\mathrm{I}\}$ which is the center of $\mathrm{SO}(B)$ and $\pi$ is central therefore the kernel of $\pi$ is $\{\pm\mathrm{I}\}$).
\begin{lemma}\label{l2}
 If $\mathrm{L}$ is a discrete subgroup of $\mathrm{SO}(B)(\R)=\mathrm{SO}(p,q)$, then $\pi(\mathrm{L})$ is a discrete subgroup of $\tilde{\G}(\R)$.
\end{lemma}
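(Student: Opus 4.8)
The plan is to exploit the fact that a \emph{central} isogeny has finite kernel, so that on real points it behaves like a finite covering map. Write $F=\ker\pi$; by Lemma \ref{l4} and the remark following it, $F=\{\pm\mathrm{I}\}$ is finite (and central), so $F$ is $0$-dimensional and the differential $d\pi_{e}$ is an isomorphism of the common Lie algebra of $\mathrm{SO}(B)$ and $\tilde{\G}$ (injective because its kernel is the Lie algebra of $F$, which is $0$; surjective because $\dim\mathrm{SO}(B)=\dim\tilde{\G}$, the kernel being finite). By left translation $d\pi_{x}$ is then an isomorphism at every point, so the induced map $\pi\colon\mathrm{SO}(p,q)\longrightarrow\tilde{\G}(\R)$ of real Lie groups is a local diffeomorphism. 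In particular it is an open map, its image is an open (hence closed, finite-index) subgroup of $\tilde{\G}(\R)$, and it is a covering map onto that image whose fibre over a point is $\ell F=\{\ell,-\ell\}$ for any $\ell$ in that fibre.

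Granting this, I would argue by contradiction. Suppose $\pi(\mathrm{L})$ is not discrete; then it has an accumulation point $g\in\tilde{\G}(\R)$, i.e.\ there are pairwise distinct $g_{k}\in\pi(\mathrm{L})$ with $g_{k}\to g$. Choose $\ell_{k}\in\mathrm{L}$ with $\pi(\ell_{k})=g_{k}$; since the $g_{k}$ are distinct, the $\ell_{k}$ are distinct. Fix $\hat g\in\mathrm{SO}(p,q)$ with $\pi(\hat g)=g$ and an open neighbourhood $W$ of $\hat g$ on which $\pi$ restricts to a homeomorphism onto the open neighbourhood $\pi(W)$ of $g$. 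For $k$ large, $g_{k}\in\pi(W)$, so there is a unique $w_{k}\in W$ with $\pi(w_{k})=g_{k}$, and $w_{k}\to\hat g$. From $\pi(w_{k})=\pi(\ell_{k})$ we get $w_{k}\ell_{k}^{-1}\in F$, i.e.\ $\ell_{k}\in\{w_{k},-w_{k}\}$; passing to a subsequence, either $\ell_{k}=w_{k}$ for all $k$ (so $\ell_{k}\to\hat g$) or $\ell_{k}=-w_{k}$ for all $k$ (so $\ell_{k}\to-\hat g$, with $-\hat g\in\mathrm{SO}(p,q)$ since $\mathrm{SO}(p,q)$ is closed in $\GL(n,\R)$). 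In either case $\mathrm{L}$ contains a sequence of pairwise distinct elements converging in $\mathrm{SO}(p,q)$; forming $\ell_{k}\ell_{k+1}^{-1}$ one then gets non-identity elements of $\mathrm{L}$ converging to $\mathrm{I}$, contradicting the discreteness of $\mathrm{L}$. Hence $\pi(\mathrm{L})$ is discrete.

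I do not expect a genuine obstacle here; the content is just that an isogeny is a finite-to-one local homeomorphism on real points. The only point needing a little care is that the fibre of $\pi$ over a point of $\pi(\mathrm{L})$ is $\{\ell,-\ell\}$ and $-\ell$ need not itself lie in $\mathrm{L}$, so one must be sure the final contradiction is drawn from the convergence of the $\ell_{k}$ themselves, not of a translate of them. Alternatively, one can avoid sequences altogether: check that the finite covering $\pi$ onto its image is a proper map (a finite-degree covering of a locally compact Hausdorff space is proper), so that it carries the closed discrete set $\mathrm{L}$ to a closed set meeting every compact set in a finite set, hence to a discrete subgroup of $\tilde{\G}(\R)$.
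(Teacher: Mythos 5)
Your proof is correct and takes essentially the same route as the paper: the paper's own proof just observes that $\pi$ is an open map with finite kernel and that discreteness of $\mathrm{L}$ then ``easily'' gives discreteness of $\pi(\mathrm{L})$, which is exactly the finite-kernel, local-homeomorphism argument you spell out. You have merely filled in the routine details the paper leaves to the reader.
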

\begin{proof}
 The homomorphism $\pi$ is an open map and its kernel is finite. Now using the discreteness of $\mathrm{L}$, it can be shown easily that $\pi(\mathrm{L})$ is a discrete subgroup of $\tilde{\G}(\R)$.
\end{proof}
\begin{lemma}\label{l3}
If $\mathrm{L}$ is a Zariski dense subgroup of $\mathrm{SO}(B)$, then $\pi(\mathrm{L})$ is a Zariski dense subgroup of $\tilde{\G}$.
\end{lemma}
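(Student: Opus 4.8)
The plan is to exploit that the isogeny $\pi\colon\mathrm{SO}(B)\longrightarrow\tilde{\G}$ of Lemma \ref{l4} is, in particular, a surjective morphism of algebraic groups, hence continuous for the Zariski topologies on source and target. First I would set $Z$ to be the Zariski closure of $\pi(\mathrm{L})$ in $\tilde{\G}$, i.e.\ the smallest Zariski closed subset of $\tilde{\G}$ containing $\pi(\mathrm{L})$; the goal is then to show $Z=\tilde{\G}$.

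Next, since $\pi$ is a morphism of algebraic varieties, it is continuous for the Zariski topologies, so $\pi^{-1}(Z)$ is a Zariski closed subset of $\mathrm{SO}(B)$. It contains $\mathrm{L}$, because $\pi(\mathrm{L})\subseteq Z$ by construction. As $\mathrm{L}$ is assumed Zariski dense in $\mathrm{SO}(B)$, the only Zariski closed subset of $\mathrm{SO}(B)$ containing $\mathrm{L}$ is $\mathrm{SO}(B)$ itself, so $\pi^{-1}(Z)=\mathrm{SO}(B)$.

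Finally I would invoke the surjectivity of $\pi$ (part of the definition of an isogeny, recorded in Lemma \ref{l4}): from $\pi^{-1}(Z)=\mathrm{SO}(B)$ we obtain $Z\supseteq\pi\bigl(\pi^{-1}(Z)\bigr)=\pi(\mathrm{SO}(B))=\tilde{\G}$, whence $Z=\tilde{\G}$. This is precisely the assertion that $\pi(\mathrm{L})$ is Zariski dense in $\tilde{\G}$.

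There is essentially no serious obstacle here; the only points that need care are being explicit that \emph{Zariski dense} refers to the Zariski topology and that $\pi$, being a morphism of the underlying varieties, pulls Zariski closed sets back to Zariski closed sets, together with the fact that $\pi$ is onto. I note that even if one only knew $\pi$ to be dominant rather than surjective the same argument goes through, since $\overline{\pi(\mathrm{SO}(B))}=\tilde{\G}$ would already force $Z=\tilde{\G}$.
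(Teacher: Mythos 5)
Your argument is correct and is essentially the paper's own proof: both rest on the Zariski-continuity of $\pi$ together with its surjectivity, the paper phrasing it as $\pi(\overline{\mathrm{L}})\subseteq\overline{\pi(\mathrm{L})}$ while you phrase the same fact via preimages of the closure. No gap; nothing further is needed.
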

\begin{proof}
 Since the map $\pi:\mathrm{SO}(B)\longrightarrow\tilde{\G}$ is continuous with respect to the Zariski topology, we get $\pi(\overline{\mathrm{L}})\subseteq\overline{\pi(\mathrm{L})}$. Therefore $\overline{\pi(\mathrm{L})}=\tilde{\G}$ (since $\overline{\mathrm{L}}=\mathrm{SO}(B)$).
\end{proof}
\begin{lemma}\label{l5}
 $\R$-rank $(\mathrm{SO}(B))$ = $\R$-rank $(\tilde{\G})$.
\end{lemma}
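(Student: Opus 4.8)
The plan is to deduce Lemma \ref{l5} from the fact that a central $\R$-isogeny of semisimple $\R$-groups carries maximal $\R$-split tori onto maximal $\R$-split tori, and in particular preserves the $\R$-rank. Write $\pi\colon\mathrm{SO}(B)\to\tilde{\G}$ for the central isogeny of Lemma \ref{l4}; it is defined over $\R$, has finite central kernel $\{\pm\mathrm I\}$, and since we are in characteristic $0$ it is separable, so $\mathrm d\pi$ is an isomorphism of Lie algebras. I will prove the two inequalities separately and then combine them.

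First I would prove $\R$-rank$(\tilde{\G})\ge\R$-rank$(\mathrm{SO}(B))$. Let $S\subset\mathrm{SO}(B)$ be a maximal $\R$-split torus. Then $\pi(S)$ is a closed torus of $\tilde{\G}$, defined over $\R$; because $\pi$ has finite fibres, $\dim\pi(S)=\dim S$, and because $\pi\colon S\to\pi(S)$ is a surjection of $\R$-tori with $S$ being $\R$-split, the quotient $\pi(S)$ is $\R$-split (the Galois group acts trivially on $X^{*}(S)$, hence on the subgroup $X^{*}(\pi(S))\hookrightarrow X^{*}(S)$). Thus $\tilde{\G}$ contains an $\R$-split torus of dimension $\R$-rank$(\mathrm{SO}(B))$.

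For the reverse inequality I would lift a maximal $\R$-split torus $S'\subset\tilde{\G}$ through $\pi$. Put $\tilde S:=\pi^{-1}(S')^{\circ}$, a connected $\R$-subgroup of $\mathrm{SO}(B)$ (the preimage of an $\R$-subgroup is defined over $\R$, and so is its identity component since $\R$ is perfect). Since $\pi$ has finite fibres, $\dim\tilde S=\dim S'$, and a dimension count shows $\pi$ restricts to a surjection $\tilde S\to S'$ with finite kernel, i.e.\ an isogeny. As $\mathrm d\pi$ is an isomorphism, $\mathrm{Lie}(\tilde S)\cong\mathrm{Lie}(S')$ is abelian, so the connected group $\tilde S$ is commutative; its unipotent part maps into the unipotent part of the torus $S'$, which is trivial, hence lies in the finite kernel and is therefore trivial, so $\tilde S$ is a torus. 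It is moreover $\R$-split: its maximal $\R$-anisotropic subtorus maps under $\pi$ onto an $\R$-anisotropic subtorus of the $\R$-split torus $S'$, which must be trivial, so that subtorus lies in $\ker\pi$ and is trivial. Hence $\mathrm{SO}(B)$ contains the $\R$-split torus $\tilde S$ of dimension $\R$-rank$(\tilde{\G})$, giving $\R$-rank$(\mathrm{SO}(B))\ge\R$-rank$(\tilde{\G})$; combining with the first step gives equality.

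The genuinely routine points — the statements about character lattices, the fact that in characteristic $0$ a connected group with abelian Lie algebra is abelian, and the behaviour of unipotent and anisotropic parts under homomorphisms — are standard; the one place that needs care is the second step, namely verifying that the lift $\tilde S$ is honestly a torus and is $\R$-split, not merely isogenous to one. Alternatively, one could bypass the whole argument by invoking the general fact that the $k$-rank is an isogeny invariant of reductive $k$-groups (see, e.g., Borel--Tits).
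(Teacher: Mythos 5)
Your argument is correct, and for the first inequality it is essentially the paper's argument in different clothing: the paper also pushes an $\R$-split torus $\mathrm{T}\subset\mathrm{SO}(B)$ forward through $\pi$ and checks that $\pi(\mathrm{T})$ is $\R$-split, though it does so by an explicit computation showing each character $\chi$ of $\pi(\mathrm{T})$ satisfies $\sigma.\chi=\chi$ for $\sigma\in\mathrm{Gal}(\C/\R)$, where you phrase the same fact via the Galois-equivariant injection $X^{*}(\pi(\mathrm{T}))\hookrightarrow X^{*}(\mathrm{T})$. Where you genuinely go beyond the paper is the reverse inequality: the paper simply says ``since $\pi$ has finite kernel, we get $\R$-rank$(\tilde{\G})=\R$-rank$(\mathrm{SO}(B))$,'' which as written only yields $\R$-rank$(\tilde{\G})\geq\R$-rank$(\mathrm{SO}(B))$, leaving implicit the standard fact that rank cannot grow under a central isogeny. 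Your second step fills exactly this gap, and your lifting argument is sound: $\tilde{S}=\pi^{-1}(S')^{\circ}$ has abelian Lie algebra (separability in characteristic $0$), hence is commutative and connected; its unipotent part and its maximal $\R$-anisotropic subtorus each map into the torus $S'$ trivially (a quotient of an anisotropic torus is anisotropic, a subtorus of a split torus is split), so both sit inside the finite kernel and, being connected, are trivial, making $\tilde{S}$ an $\R$-split torus of dimension $\R$-rank$(\tilde{\G})$. So your proof is a complete version of the paper's sketch; alternatively, as you note, one can cite the Borel--Tits fact that $k$-rank is an isogeny invariant and skip both halves.
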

\begin{proof}
We will show that if $\mathrm{T}$ is an $\R$-split torus in $\mathrm{SO}(B)$, then $\pi(\mathrm{T})$ is an $\R$-split torus in $\tilde{\G}$. For, let $\mathrm{T}$ be an $\R$-split torus in $\mathrm{SO}(B)$ i.e.  all the characters $\chi: \mathrm{T}\longrightarrow\mathbb{G}_m$ are defined over $\R$. It is clear that $\pi(\mathrm{T})$ is a connected, abelian subgroup of $\tilde{\G}$. Also, $\pi(\mathrm{T})$ is diagonalizable over $\C$ (since under a homomorphism of algebraic groups, torus maps to a torus). 

To show that $\pi(\mathrm{T})$ is $\R$-split, it is enough to show that all the characters $\chi: \pi(\mathrm{T})\longrightarrow\mathbb{G}_m$ are defined over $\R$. For, let us define $\chi': \mathrm{T}\longrightarrow\mathbb{G}_m$ as $\chi'(t)=\chi(\pi(t))$. It is clear that $\chi'$ is a character of the torus $\mathrm{T}$ which is $\R$-split, therefore $\chi'$ is defined over $\R$. Now we show that $\chi$ is fixed under the action of $\mbox{Gal}(\C/\R)$ on $\mbox{Hom}(\mathrm{T},\mathbb{G}_m)$. For, let $\sigma\in\mbox{Gal}(\C/\R)$. We have 
\begin{align*}
 \chi(\pi(t))&=\chi'(t)\\
&= (\sigma.\chi')(t)\\
&= \sigma(\chi'(\sigma^{-1}t))\\
&= \sigma(\chi\circ\pi(\sigma^{-1}t))\\
&= \sigma(\chi\sigma^{-1}(\sigma.\pi)(t)) \\
&= (\sigma.\chi)(\pi(t)) \hspace{3.0cm} (\because \chi'\mbox{ and }\pi \mbox{ are defined over }\R).\end{align*}
 Since the above equality is true for all $t\in\mathrm{T}$ and $\pi$ is surjective, therefore we get $\sigma.\chi=\chi$, for all $\sigma\in\mbox{Gal}(\C/\R)$. Hence all the characters $\chi: \pi(\mathrm{T})\longrightarrow\mathbb{G}_m$ are defined over $\R$ i.e. $\pi(\mathrm{T})$ is an $\R$-split torus in $\tilde{\G}$. Since $\pi$ has finite kernel, we get $\R$-rank($\tilde{\G}$) = $\R$-rank($\mathrm{SO}(B)$).
\end{proof}

\begin{theorem}\label{t6}
 Let $\mathrm{L}$ be a discrete subgroup of the group $\mathrm{SO}(p,q)$. Let $\mathrm{H}$ be a connected semisimple Lie group without non-trivial compact factor groups, of real rank $\geq2$ with trivial center. Let $\Gamma\leq\mathrm{H}$ be an irreducible lattice and $\delta:\Gamma\longrightarrow\mathrm{L}\leq\mathrm{SO}(B)(\R)=\mathrm{SO}(p,q)$ be an isomorphism and $\delta(\Gamma)=\mathrm{L}$ is Zariski dense in $\mathrm{SO}(B)$. Let $\tilde{\G}$ be a connected semisimple adjoint group with an (central) isogeny $\pi:\mathrm{SO}(B)\longrightarrow\tilde{\G}$. Let $\delta':\Gamma\longrightarrow\pi(\mathrm{L})\leq\tilde{\G}(\R)$ be a continuous homomorphism defined as $\delta'=\pi\circ\delta$. Let $\tilde{\G}$ has no nontrivial $\R$-anisotropic factors and $\tilde{\G}(\R)^\circ$ be the connected component of the identity element in $\tilde{G}(\R)$. Then $\delta'$ extends uniquely to an isomorphism $\tilde{\delta'}:\mathrm{H}\longrightarrow\tilde{\G}(\R)^{\circ}$, and the group $\tilde{\G}(\R)$ has $\R$-rank $\
geq 2$, and $\pi(\mathrm{L})$ is a lattice in $\tilde{\G}(\R)$.
\end{theorem}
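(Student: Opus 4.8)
The plan is to apply Margulis superrigidity (Theorem~\ref{t4}(b)) to the homomorphism $\delta'$ and then upgrade the resulting abstract continuous extension to an isomorphism. For the superrigidity input I would take $k=\R$ and $\mathrm{F}=\tilde{\G}$: by Lemma~\ref{l3} the image $\delta'(\Gamma)=\pi(\mathrm{L})$ is Zariski dense in $\tilde{\G}$ (as $\mathrm{L}$ is Zariski dense in $\mathrm{SO}(B)$), and by Lemma~\ref{l2} it is discrete, hence closed, in $\tilde{\G}(\R)$ (as $\mathrm{L}$ is discrete in $\mathrm{SO}(p,q)$). Since $\tilde{\G}$ is connected semisimple, adjoint, and has no nontrivial $\R$-anisotropic factors, $\mathrm{H}$ is connected semisimple without compact factors and of real rank $\geq 2$, and $\Gamma$ is an irreducible lattice, Theorem~\ref{t4}(b) produces a unique continuous homomorphism $\tilde{\delta'}:\mathrm{H}\longrightarrow\tilde{\G}(\R)$ extending $\delta'$; its image is connected and contains the identity, so $\tilde{\delta'}(\mathrm{H})\subseteq\tilde{\G}(\R)^{\circ}$.

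Next I would show $\tilde{\delta'}$ is injective. Its kernel $\mathrm{N}$ is a closed normal subgroup of $\mathrm{H}$; the identity component $\mathrm{N}^{\circ}$ is the product of a subset of the simple factors of $\mathrm{H}$, and the quotient $\mathrm{H}/\mathrm{N}^{\circ}$, being again adjoint, has no nontrivial discrete normal subgroup, so $\mathrm{N}=\mathrm{N}^{\circ}$ is connected and equals such a partial product. Writing $\mathrm{H}=\mathrm{N}\times\mathrm{M}$, one gets $\tilde{\delta'}=\psi\circ\mathrm{pr}_{\mathrm{M}}$ for an injective continuous homomorphism $\psi:\mathrm{M}\longrightarrow\tilde{\G}(\R)$. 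If $\mathrm{M}\neq\{e\}$, irreducibility of $\Gamma$ forces $\mathrm{pr}_{\mathrm{M}}(\Gamma)$ to be dense in $\mathrm{M}$, so $\delta'(\Gamma)=\psi(\mathrm{pr}_{\mathrm{M}}(\Gamma))$ is dense in $\psi(\mathrm{M})$; but $\delta'(\Gamma)=\pi(\mathrm{L})$ is closed, so $\psi(\mathrm{M})=\pi(\mathrm{L})$ is both connected and discrete, hence trivial, contradicting that $\pi(\mathrm{L})$ is Zariski dense in the nontrivial group $\tilde{\G}$. If $\mathrm{M}=\{e\}$, then $\tilde{\delta'}$, and hence $\delta'(\Gamma)$, is trivial, again contradicting Zariski density. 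So $\mathrm{N}=\{e\}$. (In particular $\pi\circ\delta$ is then injective on $\Gamma$, as it must be: $Z(\Gamma)$ is trivial by the Borel density theorem, so $-\mathrm{I}\notin\mathrm{L}$.)

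Finally I would pin down the image and conclude. The subalgebra $\mathfrak{h}'=d\tilde{\delta'}(\mathfrak{h})$ of the Lie algebra $\mathfrak{g}$ of $\tilde{\G}$ is semisimple, being isomorphic to $\mathfrak{h}$; since a semisimple Lie subalgebra of the Lie algebra of a linear algebraic group is algebraic, $\mathfrak{h}'=\mathrm{Lie}(\mathrm{H}')$ for a connected semisimple $\R$-subgroup $\mathrm{H}'\subseteq\tilde{\G}$, and since two connected Lie subgroups with the same Lie algebra coincide, $\tilde{\delta'}(\mathrm{H})=\mathrm{H}'(\R)^{\circ}$. As $\tilde{\delta'}(\mathrm{H})$ contains the Zariski dense set $\pi(\mathrm{L})$, the Zariski closure of $\mathrm{H}'(\R)^{\circ}$, namely $\mathrm{H}'$, is all of $\tilde{\G}$, whence $\mathfrak{h}'=\mathfrak{g}$ and $\tilde{\delta'}(\mathrm{H})=\tilde{\G}(\R)^{\circ}$. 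Thus $\tilde{\delta'}:\mathrm{H}\to\tilde{\G}(\R)^{\circ}$ is a continuous bijective homomorphism of second countable locally compact groups, hence an isomorphism of Lie groups by the open mapping theorem. Real rank is a Lie-group invariant, so $\tilde{\G}(\R)^{\circ}$, and therefore $\tilde{\G}(\R)$, has $\R$-rank $\geq 2$; and $\pi(\mathrm{L})=\tilde{\delta'}(\Gamma)$, being the image of the lattice $\Gamma$ under the isomorphism $\tilde{\delta'}$, is a lattice in $\tilde{\G}(\R)^{\circ}$ and hence, being of finite index there, a lattice in $\tilde{\G}(\R)$. I expect the main obstacle to be this last stage --- deducing surjectivity of Margulis's abstract extension onto $\tilde{\G}(\R)^{\circ}$ from the algebraicity of $\mathfrak{h}'$ and the Zariski density of $\pi(\mathrm{L})$ --- with the injectivity argument via irreducibility of $\Gamma$ and the discreteness of $\pi(\mathrm{L})$ being the other essential point.
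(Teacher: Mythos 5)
Your proposal follows the same skeleton as the paper --- both proofs feed $\delta'=\pi\circ\delta$ into Margulis superrigidity (Theorem~\ref{t4}(b)), using Lemmas~\ref{l2} and~\ref{l3} for discreteness and Zariski density of $\pi(\mathrm{L})$, and then upgrade the continuous extension $\tilde{\delta'}$ to an isomorphism onto $\tilde{\G}(\R)^{\circ}$ before transporting the lattice and the rank. The difference is in how that upgrade is done: the paper simply invokes Margulis's Remarks 6.17(ii) and (iii) of Chapter IX (surjectivity onto $\tilde{\G}(\R)^{\circ}$ from Zariski density of the image, and injectivity from trivial center, absence of compact factors, irreducibility of $\Gamma$ and discreteness of $\delta'(\Gamma)$), whereas you reprove the content of those remarks from scratch: surjectivity via the algebraicity of the semisimple subalgebra $d\tilde{\delta'}(\mathfrak{h})$ together with Zariski density of $\pi(\mathrm{L})$, and injectivity via the structure of closed normal subgroups of the centerless semisimple group $\mathrm{H}$, density of projections of the irreducible lattice, and discreteness of $\pi(\mathrm{L})$. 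What your route buys is self-containedness (you do not lean on the unproved remarks), at the cost of length; the paper's citation is shorter and is exactly the intended use of Margulis's statement. Two small points to tighten in your write-up: in the injectivity step the claim ``if $\mathrm{M}\neq\{e\}$ then $\mathrm{pr}_{\mathrm{M}}(\Gamma)$ is dense in $\mathrm{M}$'' is only valid when $\mathrm{M}$ is a \emph{proper} factor, i.e.\ when $\mathrm{N}\neq\{e\}$ (otherwise $\mathrm{pr}_{\mathrm{M}}(\Gamma)=\Gamma$ is discrete), so the argument should be phrased as a contradiction starting from $\mathrm{N}\neq\{e\}$; and continuity only gives $\psi(\mathrm{M})\subseteq\overline{\delta'(\Gamma)}=\pi(\mathrm{L})$ rather than equality, though connectedness of $\psi(\mathrm{M})$ inside a discrete set still forces triviality and hence the desired contradiction. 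With these adjustments your argument is correct.
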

\begin{proof}
 The group $\tilde{\G}$ is adjoint, and has no nontrivial $\R$-anisotropic factors and $\pi(\mathrm{L})$ is a discrete subgroup of $\tilde{\G}(\R)$ (by Lemma \ref{l2}), and it is also Zariski dense in $\tilde{\G}$ (by Lemma \ref{l3}). Therefore by Theorem \ref{t4} we get a continuous homomorphism $\tilde{\delta'}:\mathrm{H}\longrightarrow\tilde{\G}(\R)$ with $\tilde{\delta'}|_{\Gamma}=\delta'$. Since the group $\tilde\delta'(\mathrm{H})$ is a connected semisimple group which is Zariski dense in $\tilde{\G}$ (since $\tilde\delta'(\Gamma)=\pi(\mathrm{L})$ is Zariski dense in $\tilde{\G}$), it follows from \cite{Mar} (Remark 6.17 (ii) of Chapter IX) that $\tilde{\delta'}(\mathrm{H})=\tilde{\G}(\R)^{\circ}$. Since $\mathrm{H}$ has trivial center and no nontrivial compact factor groups, $\Gamma$ is an irreducible lattice in $\mathrm{H}$, and $\delta'(\Gamma)=\pi(\mathrm{L})$ is a nontrivial discrete subgroup of $\tilde{\G}(\R)$, therefore it follows from \cite{Mar} (Remark 6.17 (iii) of Chapter IX) that $\tilde{\
delta'}$ is an isomorphism of $\mathrm{H}$ onto $\tilde{\G}(\R)^{\circ}$, and hence $\pi(\mathrm{L})$ is a lattice in $\tilde{\G}(\R)^{\circ}$, and the $\R$-rank of $\tilde{\G}(\R)$ is $\geq 2$. Since $\tilde{\G}(\R)^{\circ}$ is a finite index subgroup of $\tilde{\G}(\R)$, $\pi(\mathrm{L})$ is a lattice in $\tilde{\G}(\R)$.
\end{proof}

\begin{remark}
In the proof of Theorem \ref{t6}, the fact that $\mathrm{H}$ has trivial center, has been used only to show that $\tilde{\delta'}$ is an isomorphism. If the group $\mathrm{H}$ does not have trivial center, then the homomorphism $\tilde{\delta'}$ has finite kernel, and $\tilde{\delta'}(\Gamma)=\pi(\mathrm{L})$ is still a lattice in $\tilde{\G}(\R)$ (since under such homomorphism $\tilde{\delta'}$, a lattice maps onto a lattice). Therefore Theorem \ref{t6} is also true for a connected semisimple Lie group with non-trivial center, and without non-trivial compact factor groups, of real rank $\geq2$. 
\end{remark}

\begin{lemma}\label{l6}
 Let $\mathrm{L}$ be a discrete subgroup of $\mathrm{SO}(p,q)$, and $\tilde{\G},\pi$ as in Lemma \ref{l4}. If $\pi(\mathrm{L})$ is a lattice in $\tilde{\G}(\R)$, then $\mathrm{L}$ is a lattice in $\mathrm{SO}(p,q)$.
\end{lemma}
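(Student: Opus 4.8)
The plan is to exploit that the central isogeny $\pi\colon\mathrm{SO}(B)\to\tilde{\G}$ changes the group only by its finite central kernel $Z:=\ker\pi$ (contained in $\{\pm\mathrm{I}\}$), so that, after adjusting by finite-index subgroups, being a lattice is preserved under $\pi$ in both directions. First I would reduce to the case $Z\subseteq\mathrm{L}$: the set $\mathrm{L}Z=\bigcup_{z\in Z}\mathrm{L}z$ is a finite union of translates of the discrete set $\mathrm{L}$, hence is itself discrete, contains $\mathrm{L}$ as a subgroup of finite index, and satisfies $\pi(\mathrm{L}Z)=\pi(\mathrm{L})$; since a discrete subgroup is a lattice if and only if some (equivalently every) finite-index subgroup of it is, it suffices to treat $\mathrm{L}Z$. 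So from now on assume $Z\subseteq\mathrm{L}$, and observe that then $\mathrm{L}=\pi^{-1}(\pi(\mathrm{L}))\cap\mathrm{SO}(p,q)$.

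Next I would unwind $\pi$ as a map of Lie groups. Since $\pi$ is a (central) isogeny and we work in characteristic zero, its differential at the identity is an isomorphism of (real) Lie algebras, so $\pi\colon\mathrm{SO}(p,q)\to\tilde{\G}(\R)$ is a local diffeomorphism, in particular an open map; hence $\mathrm{G}_1:=\pi(\mathrm{SO}(p,q))$ is an open subgroup of $\tilde{\G}(\R)$. An open subgroup is closed and contains the identity component, so $\mathrm{G}_1$ has finite index in $\tilde{\G}(\R)$ (which has finitely many connected components). Thus $\pi$ induces an isomorphism of topological groups $\mathrm{SO}(p,q)/Z\cong\mathrm{G}_1$. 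Moreover $\pi(\mathrm{L})\subseteq\mathrm{G}_1$, and since $\pi(\mathrm{L})$ is a lattice in $\tilde{\G}(\R)$ while $\mathrm{G}_1$ has finite index there, the natural map $\pi(\mathrm{L})\backslash\mathrm{G}_1\to\pi(\mathrm{L})\backslash\tilde{\G}(\R)$ embeds the former as an open subset of the latter, which has finite invariant volume; hence $\pi(\mathrm{L})$ is a lattice in $\mathrm{G}_1$ as well.

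Finally I would carry the finite invariant volume back through $\pi$. Both $\mathrm{SO}(p,q)$ and $\mathrm{G}_1\cong\mathrm{SO}(p,q)/Z$ are unimodular (a real semisimple group, and its quotient by a finite central subgroup), so invariant measures on the quotients exist. Because $Z\subseteq\mathrm{L}$, the projection $\mathrm{SO}(p,q)\to\mathrm{SO}(p,q)/Z$ descends to a \emph{homeomorphism} $\mathrm{L}\backslash\mathrm{SO}(p,q)\cong\pi(\mathrm{L})\backslash\mathrm{G}_1$, and the quotient-integration (Weil) formula, applied to the finite covering $\mathrm{SO}(p,q)\to\mathrm{SO}(p,q)/Z$ whose fibres all have exactly $|Z|$ points, identifies the $\mathrm{SO}(p,q)$-invariant measure on $\mathrm{L}\backslash\mathrm{SO}(p,q)$ with a positive multiple of the $\mathrm{G}_1$-invariant measure on $\pi(\mathrm{L})\backslash\mathrm{G}_1$. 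The latter is finite by the previous paragraph, hence so is the former, and therefore $\mathrm{L}$ is a lattice in $\mathrm{SO}(p,q)$.

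All the ingredients are routine: openness of an isogeny, finite index of an open subgroup, unimodularity of (quotients of) semisimple groups, and stability of the lattice property under finite-index subgroups. I expect the only step requiring genuine care to be the measure bookkeeping of the last paragraph --- matching Haar measures across the finite covering $\mathrm{SO}(p,q)\to\mathrm{SO}(p,q)/Z$ and tracking how the normalization descends to the coset spaces --- together with the observation, used there, that one must first enlarge $\mathrm{L}$ to contain $Z$, for otherwise the induced map $\mathrm{L}\backslash\mathrm{SO}(p,q)\to\pi(\mathrm{L})\backslash\mathrm{G}_1$ is only finite-to-one rather than a homeomorphism.
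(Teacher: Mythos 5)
Your argument is correct, but it transports the measure through $\pi$ in the opposite direction from the paper, and with different bookkeeping. The paper works forward: it takes the $\mathrm{SO}(p,q)$-invariant measure $\mu$ on $\mathrm{L}\backslash\mathrm{SO}(p,q)$, pushes it along the induced map $\tilde{\pi}:\mathrm{L}\backslash\mathrm{SO}(p,q)\rightarrow\pi(\mathrm{L})\backslash\tilde{\G}(\R)$, asserts that $\tilde{\pi}_{*}(\mu)$ is $\tilde{\G}(\R)$-invariant (using the claim that $\tilde{\pi}$ is surjective), and then invokes uniqueness up to scalars of the invariant measure on $\pi(\mathrm{L})\backslash\tilde{\G}(\R)$ to conclude that $\mu(\mathrm{L}\backslash\mathrm{SO}(p,q))=\tilde{\pi}_{*}(\mu)\bigl(\pi(\mathrm{L})\backslash\tilde{\G}(\R)\bigr)<\infty$. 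You work backward: you absorb the finite central kernel by replacing $\mathrm{L}$ with $\mathrm{L}Z$, cut $\tilde{\G}(\R)$ down to the open finite-index image $\mathrm{G}_1=\pi(\mathrm{SO}(p,q))$, note that $\pi(\mathrm{L})$ is still a lattice there, and then pull the finite invariant measure back through the homeomorphism $\mathrm{L}\backslash\mathrm{SO}(p,q)\cong\pi(\mathrm{L})\backslash\mathrm{G}_1$. Your extra care is not wasted: the paper's invariance step leans on surjectivity of $\tilde{\pi}$, which is precisely the point where $\pi(\mathrm{SO}(p,q))$ could be a proper (finite-index) open subgroup of $\tilde{\G}(\R)$, since an isogeny need not be surjective on real points; your passage to $\mathrm{G}_1$ handles that explicitly, and the $\mathrm{L}Z$ step handles the kernel, which the pushforward direction never has to mention. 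What the paper's route buys is brevity --- pushing a measure forward needs neither injectivity of $\tilde{\pi}$ nor any enlargement of $\mathrm{L}$, only uniqueness of the invariant measure on the target --- while your route buys an argument whose hypotheses are all verified on the page (openness of $\pi$, finite index of an open subgroup, stability of the lattice property under commensurability, and the equivariant identification of the two quotients).
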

\begin{proof}
 Since $\mathrm{L}$ is a discrete subgroup of $\mathrm{SO}(p,q)$ and $\mathrm{SO}(p,q)$ is unimodular, the quotient $\mathrm{L}\backslash\mathrm{SO}(p,q)$ has an $\mathrm{SO}(p,q)$-invariant measure $\mu$. The homomorphism $\pi:\mathrm{SO}(p,q)\longrightarrow\tilde{\G}(\R)$ induces a continuous map $\tilde{\pi}:\mathrm{L}\backslash\mathrm{SO}(p,q)\longrightarrow\pi(\mathrm{L})\backslash\tilde{\G}(\R)$, which is defined as $\tilde{\pi}(\mathrm{L}g)=\pi(\mathrm{L})\pi(g)$. It can be checked easily that the pushforward measure $\tilde{\pi}_{*}(\mu)$ on the quotient $\pi(\mathrm{L})\backslash\tilde{\G}(\R)$ defined as $\tilde{\pi}_{*}(\mu)(\tilde{\mathrm{E}})=\mu(\tilde{\pi}^{-1}(\tilde{\mathrm{E}}))$, for all measurable subsets $\tilde{\mathrm{E}}$ of $\pi(\mathrm{L})\backslash\tilde{\G}(\R)$, is $\tilde{\G}(\R)$-invariant (since $\tilde{\pi}$ is surjective and $\mu$ is $\mathrm{SO}(p,q)$-invariant). Therefore by the uniqueness of a $\tilde{\G}(\R)$-invariant measure on the quotient $\pi(\mathrm{L})\backslash\
tilde{\G}(\R)$, we get $\tilde{\pi}_{*}(\mu)(\pi(\mathrm{L})\backslash\tilde{\G}(\R))<\infty$ (since $\pi(\mathrm{L})$ is a lattice in $\tilde{\G}(\R)$), and hence $\mu(\mathrm{L}\backslash\mathrm{SO}(p,q))<\infty$ i.e. $\mathrm{L}$ is a lattice in $\mathrm{SO}(p,q)$. 
\end{proof}

\begin{theorem}\label{t7}
The Coxeter group $\W$ is Zariski dense in the group $\mathrm{O}(B)$.
\end{theorem}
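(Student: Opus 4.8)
The plan is to show that the Zariski closure $\mathbf{G}:=\overline{\W}\subseteq\mathrm{O}(B)$ is all of $\mathrm{O}(B)$, by a chain of reductions. I would begin with two facts about the geometric representation. First, since $(\W,S)$ is irreducible and $B$ is non-degenerate, $\V$ is a $\W$-irreducible module: any proper invariant subspace lies in the radical of $B$ (\cite[Ch.~V, \S 4]{Bou}, \cite[\S 6.3]{Hum}), which here is $0$. Second, $\V$ is even \emph{absolutely} irreducible: the generators $\sigma_i$ are reflections of determinant $-1$, while every $\C$-linear or $\mathbb{H}$-linear automorphism of $\V$ has positive real determinant, so $\mathrm{End}_{\W}(\V)$ contains no complex structure and equals $\R$. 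Note also that $n\geq 3$ under the standing hypotheses (for $n\leq 2$ there is no infinite irreducible Coxeter group with non-degenerate $B$), and $\mathbf{G}^{\circ}\neq 1$ because $\W\subseteq\mathbf{G}(\R)$ is infinite. Since $\mathbf{G}$ has the same invariant subspaces as $\W$, it acts absolutely irreducibly on $\V$, and hence is reductive (its unipotent radical is normal, so its fixed subspace is $\W$-invariant, hence $0$ or $\V$, hence the radical is trivial).

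Next I would cut down to a semisimple group. By Clifford theory the $\mathbf{G}^{\circ}$-isotypic components of $\V\otimes\C$ are permuted transitively by $\mathbf{G}/\mathbf{G}^{\circ}$, hence by $\W$. A reflection $\sigma_i\in\mathbf{G}$ can interchange two of these components only if its $(-1)$-eigenspace has dimension equal to that of a component; since that eigenspace is one-dimensional, this forces all components to be lines, so $\mathbf{G}^{\circ}$ acts through characters and is a non-trivial torus — then $\W$ would have a finite-index abelian subgroup, which is impossible, since an infinite irreducible Coxeter group with non-degenerate (in particular not positive-semidefinite) Tits form is neither finite nor of affine type and so contains a non-abelian free subgroup. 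If no $\sigma_i$ interchanges components, then $\W$ fixes them all and there is only one. Thus $\mathbf{G}^{\circ}$ acts absolutely irreducibly on $\V$; its centre then acts by scalars, the only scalars in $\mathrm{O}(B)$ are $\pm 1$, so the connected centre is trivial, $\mathbf{G}^{\circ}$ is semisimple, and being connected it lies in $\mathrm{SO}(B)=\mathrm{O}(B)^{\circ}$.

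It then remains to prove the crux: a connected semisimple subgroup $\mathbf{G}^{\circ}\subseteq\mathrm{SO}(B)$ acting irreducibly on $\V$ and normalized by every reflection $\sigma_i$ must equal $\mathrm{SO}(B)$. For $n=3$ this is immediate because $\mathrm{SO}(B)\cong\mathrm{PGL}_2$ has no proper positive-dimensional connected semisimple subgroup. For $n\geq 4$ one really has to use the reflections: conjugation by $\sigma_i$ is an automorphism of $\mathbf{G}^{\circ}$, and since the centralizer of $\mathbf{G}^{\circ}$ in $\mathrm{O}(B)$ is $\{\pm 1\}$ (Schur), each $\sigma_i$ is $\pm 1$ times an element of $\mathbf{G}^{\circ}$ or realizes an outer involution of $\mathbf{G}^{\circ}$; in either case one shows that the spectrum $(-1,1,\ldots,1)$ of a reflection cannot be realized unless $\mathbf{G}^{\circ}=\mathrm{SO}(B)$. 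Concretely, a reflection yields an element of $\mathbf{G}^{\circ}$ (measured against the fixed space of an outer involution, when there is one) whose character on $\V$ equals $\dim\V-2$, and a weight-lattice computation rules this out in every proper irreducible semisimple subgroup (this is the case analysis — running over $\mathrm{Sym}^4\,\mathrm{SL}_2\subset\mathrm{SO}_5$, $\mathrm{G}_2\subset\mathrm{SO}_7$, $\mathrm{SO}_a\times\mathrm{SO}_b\subset\mathrm{SO}_{ab}$, and so on — underlying the theorem of Benoist and de la Harpe on Zariski closures of Coxeter groups, which one may also simply quote). Once $\mathbf{G}^{\circ}=\mathrm{SO}(B)$ is known, the theorem follows: $\sigma_i\in\mathbf{G}$ has determinant $-1$, so $\mathbf{G}\not\subseteq\mathrm{SO}(B)$ and hence $\mathbf{G}=\mathrm{O}(B)$.

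The hard part is precisely that last step. Everything preceding it is soft — irreducibility and absolute irreducibility of the geometric representation, reductivity, Clifford theory, and eigenvalue bookkeeping for reflections — whereas ruling out proper irreducible semisimple subgroups normalized by a reflection seems to require either the classification of small irreducible orthogonal representations together with the character/weight check above, or a direct appeal to Benoist--de la Harpe. I would write out the soft reductions in detail and then either carry out the finitely many cases of the crux or cite the known result.
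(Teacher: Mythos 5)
The paper does not actually prove Theorem \ref{t7}: it states it and refers to Benoist--de la Harpe \cite{B-H}. Your proposal therefore takes a genuinely different route, namely an attempt at the proof itself: you establish (absolute) irreducibility of the geometric representation, deduce that the Zariski closure is reductive, use Clifford theory and the spectrum of the reflections to force the identity component $\mathbf{G}^{\circ}$ to act irreducibly and be semisimple, and reduce everything to the claim that a proper connected semisimple subgroup of $\mathrm{SO}(B)$ acting irreducibly on $\V$ cannot be normalized by all the reflections $\sigma_i$. The gain is that the soft structure of the argument becomes visible and the $n=3$ case is genuinely elementary; the loss is that the exclusion step you call the crux is precisely the substance of the Benoist--de la Harpe theorem, and you leave it either to an unexecuted case analysis or to the very citation the paper uses, so in its only completed form your argument is the paper's citation preceded by reductions that the citation makes redundant. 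If you do want to carry the self-contained version through, two of the ``soft'' steps also need patching: (i) in the Clifford step, excluding reflections that swap isotypic components only shows that $\V\otimes\C$ is $\mathbf{G}^{\circ}$-isotypic, and you must still rule out multiplicity $m>1$ (for instance by observing that the centralizer algebra $M_m(\C)$ is normalized by $\W$ and exploiting the invariant form together with the eigenvalue constraint coming from the reflections); and (ii) the assertion that an infinite irreducible Coxeter group with non-degenerate, non-positive-semidefinite Tits form contains a non-abelian free subgroup is itself a non-trivial theorem (Tits alternative plus excluding virtual solvability, or Margulis--Vinberg/de la Harpe) and needs its own reference rather than being taken as immediate from the standing hypotheses.
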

 For a proof, see \cite{B-H}.
\begin{lemma}\label{l7}
 Let $\G$ be a topological group and $\mathrm{L'},\mathrm{L}$ are subgroups of $\G$ such that $\mathrm{L}$ has finite index in $\mathrm{L'}$. Then $(\bar{\mathrm{L'}})^{o}=(\bar{\mathrm{L}})^{o}$, where $(\bar{\mathrm{L}})^{o}$ is the connected component of the identity element of the closure of $\mathrm{L}$ in $\G$.
\end{lemma}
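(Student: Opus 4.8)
The plan is to prove the two inclusions $(\bar{\mathrm{L}})^{o}\subseteq(\bar{\mathrm{L'}})^{o}$ and $(\bar{\mathrm{L'}})^{o}\subseteq(\bar{\mathrm{L}})^{o}$ separately. The first is immediate: since $\mathrm{L}\subseteq\mathrm{L'}$ we have $\bar{\mathrm{L}}\subseteq\bar{\mathrm{L'}}$, so $(\bar{\mathrm{L}})^{o}$ is a connected subset of $\bar{\mathrm{L'}}$ containing the identity $e$, and therefore lies inside the identity component $(\bar{\mathrm{L'}})^{o}$.

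For the reverse inclusion, the first step is to show that $\bar{\mathrm{L}}$ has finite index in $\bar{\mathrm{L'}}$. Writing $\mathrm{L'}=\bigcup_{i=1}^{k}\mathrm{L}g_{i}$ as a finite union of cosets with $g_{i}\in\mathrm{L'}$, and using that finite unions commute with closure in any topological space and that right translation by $g_{i}$ is a homeomorphism of $\G$, one gets $\bar{\mathrm{L'}}=\bigcup_{i=1}^{k}\overline{\mathrm{L}g_{i}}=\bigcup_{i=1}^{k}\bar{\mathrm{L}}\,g_{i}$. Since $\bar{\mathrm{L}}$ is a subgroup of $\G$ (the closure of a subgroup in a topological group is a subgroup) and each $g_{i}\in\bar{\mathrm{L'}}$, this exhibits $\bar{\mathrm{L'}}$ as a union of at most $k$ cosets of $\bar{\mathrm{L}}$, so $[\bar{\mathrm{L'}}:\bar{\mathrm{L}}]\le k<\infty$.

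Next I would upgrade finite index to openness: $\bar{\mathrm{L}}$ is closed in $\G$, hence closed in $\bar{\mathrm{L'}}$, and a closed subgroup of finite index is open, because its complement in $\bar{\mathrm{L'}}$ is a finite union of its (closed) cosets. Finally, an open subgroup contains the identity component: $\bar{\mathrm{L}}\cap(\bar{\mathrm{L'}})^{o}$ is simultaneously open and closed in the connected space $(\bar{\mathrm{L'}})^{o}$ and contains $e$, hence equals $(\bar{\mathrm{L'}})^{o}$; thus $(\bar{\mathrm{L'}})^{o}\subseteq\bar{\mathrm{L}}$, and being connected and containing $e$ it lies in $(\bar{\mathrm{L}})^{o}$. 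Combining the two inclusions gives $(\bar{\mathrm{L'}})^{o}=(\bar{\mathrm{L}})^{o}$.

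I expect the only genuinely substantive point to be the passage from ``$\mathrm{L}$ has finite index in $\mathrm{L'}$'' to ``$\bar{\mathrm{L}}$ has finite index in $\bar{\mathrm{L'}}$'', via the fact that the closure of a finite union of cosets is the union of the closures; once that is in hand, openness of $\bar{\mathrm{L}}$ in $\bar{\mathrm{L'}}$ and the fact that open subgroups absorb the identity component are standard. It is worth noting that no separation axiom on $\G$ is needed, since finite unions commute with closure in any topological space and none of the steps uses Hausdorffness.
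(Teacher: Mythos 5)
Your proof is correct and follows essentially the same route as the paper: write $\mathrm{L'}$ as a finite union of cosets of $\mathrm{L}$, pass to closures to get $[\bar{\mathrm{L'}}:\bar{\mathrm{L}}]<\infty$, and then use the clopen/connectedness argument to conclude $(\bar{\mathrm{L'}})^{o}=(\bar{\mathrm{L}})^{o}$. If anything, your handling of the last step (showing $\bar{\mathrm{L}}\cap(\bar{\mathrm{L'}})^{o}$ is clopen in $(\bar{\mathrm{L'}})^{o}$, hence $(\bar{\mathrm{L'}})^{o}\subseteq\bar{\mathrm{L}}\Rightarrow(\bar{\mathrm{L'}})^{o}\subseteq(\bar{\mathrm{L}})^{o}$) is slightly more carefully spelled out than the paper's.
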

\begin{proof}
  Since $\mathrm{L}$ has finite index $d$ (say) in $\mathrm{L'}$, 
\begin{align*}
&\qquad\mathrm{L'}=\cup_{i=1}^{d}\gamma_i\mathrm{L};\ \gamma_i\in\mathrm{L'}\\
&\Rightarrow \bar{\mathrm{L'}}=\cup_{i=1}^{d}\gamma_i\bar{\mathrm{L}};\ \gamma_i\in\mathrm{L'}\\
&\Rightarrow [\bar{\mathrm{L'}}:\bar{\mathrm{L}}]\leq d\\
&\Rightarrow \bar{\mathrm{L}} \mbox{ is a finite index subgroup of the group }\bar{\mathrm{L'}}.
\end{align*}
Hence $\bar{\mathrm{L}}$ is closed and open in $\bar{\mathrm{L'}}$ and $(\bar{\mathrm{L'}})^{o}\supset(\bar{\mathrm{L}})^{o}$, therefore $(\bar{\mathrm{L}})^{o}$ is open and closed in $(\bar{\mathrm{L'}})^{o}$ which is connected. This shows that $(\bar{\mathrm{L'}})^{o}=(\bar{\mathrm{L}})^{o}$.
\end{proof}

\begin{corollary}\label{c1}
 In the above lemma if we take $\G=\mathrm{O}(p,q)=\mathrm{O}(B)(\R)$, and $\mathrm{L'}=\W,$ the Coxeter group and $\mathrm{L}\leq\mathrm{SO}(p,q)\cap\W$ such that $[\W:\mathrm{L}]<\infty$, then $\bar{\mathrm{L}}=\mathrm{SO}(p,q)$ i.e. $\mathrm{L}$ is Zariski dense in $\mathrm{SO}(p,q).$ Hence $\mathrm{L}$ is Zariski dense in $\mathrm{SO}(B)$ {\rm(}$\because$ $\mathrm{SO}(B)(\R)=\mathrm{SO}(p,q)$ is Zariski dense in $\mathrm{SO}(B)${\rm)}.
\end{corollary}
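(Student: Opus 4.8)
The plan is to apply Lemma \ref{l7} with the Zariski topology in place of the Euclidean one: in the Euclidean topology $\W$ is discrete, so its closure gives nothing, whereas by Theorem \ref{t7} its Zariski closure is as large as possible. Concretely I would take $\G=\mathrm{O}(p,q)=\mathrm{O}(B)(\R)$, viewed as the set of real points of the algebraic group $\mathrm{O}(B)$ and equipped with the Zariski topology. The proof of Lemma \ref{l7} uses only that left translation by a fixed element is a homeomorphism and that the closure of a subgroup is again a subgroup; both hold in the Zariski topology, so the lemma applies verbatim in this setting. I would also record the elementary fact that, in this topology, $(\mathrm{O}(p,q))^{o}=\mathrm{SO}(p,q)=\mathrm{SO}(B)(\R)$: the determinant exhibits $\mathrm{SO}(p,q)$ as a Zariski-clopen subgroup of finite index, and $\mathrm{SO}(B)(\R)$ is Zariski dense in the irreducible variety $\mathrm{SO}(B)$ (density of the real points of a connected reductive group, cf. \cite{P-R}), hence is itself irreducible and in particular Zariski connected.

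Granting these preliminaries, the argument is short. By Theorem \ref{t7} the Coxeter group $\W$ is Zariski dense in $\mathrm{O}(B)$, so its Zariski closure inside $\mathrm{O}(p,q)=\mathrm{O}(B)(\R)$ is all of $\mathrm{O}(p,q)$; that is, $\bar{\W}=\mathrm{O}(p,q)$. Since $[\W:\mathrm{L}]<\infty$, Lemma \ref{l7} gives $(\bar{\mathrm{L}})^{o}=(\bar{\W})^{o}=(\mathrm{O}(p,q))^{o}=\mathrm{SO}(p,q)$. On the other hand $\mathrm{L}\leq\mathrm{SO}(p,q)\cap\W\subseteq\mathrm{SO}(p,q)$ and $\mathrm{SO}(p,q)$ is Zariski closed in $\mathrm{O}(p,q)$, so $\bar{\mathrm{L}}\subseteq\mathrm{SO}(p,q)$. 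Combining, $\mathrm{SO}(p,q)=(\bar{\mathrm{L}})^{o}\subseteq\bar{\mathrm{L}}\subseteq\mathrm{SO}(p,q)$, hence $\bar{\mathrm{L}}=\mathrm{SO}(p,q)$, i.e. $\mathrm{L}$ is Zariski dense in $\mathrm{SO}(p,q)$. Finally, since $\mathrm{SO}(p,q)=\mathrm{SO}(B)(\R)$ is in turn Zariski dense in $\mathrm{SO}(B)$, transitivity of Zariski density shows that $\mathrm{L}$ is Zariski dense in $\mathrm{SO}(B)$.

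The content is really just bookkeeping, and the two points that call for a little care are the two appearances of the Zariski topology flagged above. First, one must confirm that Lemma \ref{l7} transfers to the non-Hausdorff Zariski setting and --- crucially --- that the Zariski-identity component of $\mathrm{O}(p,q)$ is exactly $\mathrm{SO}(p,q)$ and not something larger; everything downstream depends on this identification, so I expect it to be the main, if mild, obstacle. Second, one passes between Zariski density in the real Lie group $\mathrm{SO}(p,q)$ and Zariski density in the algebraic group $\mathrm{SO}(B)$ using the density of $\mathrm{SO}(B)(\R)$ in $\mathrm{SO}(B)$, which is the same fact quoted parenthetically in the statement of the corollary and needs no further comment.
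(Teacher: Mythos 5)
Your proposal is correct and follows exactly the route the paper intends: its proof of Corollary \ref{c1} is simply ``follows from Theorem \ref{t7} and Lemma \ref{l7}'', and your write-up supplies precisely the omitted bookkeeping (applying Lemma \ref{l7} with the Zariski topology, where only translation-homeomorphisms and closure arguments are needed, and identifying $(\mathrm{O}(p,q))^{o}=\mathrm{SO}(p,q)$ via Zariski density of $\mathrm{SO}(B)(\R)$ in the irreducible group $\mathrm{SO}(B)$). No gaps; this is the same argument, just made explicit.
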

\begin{proof}
The proof follows from Theorem \ref{t7} and Lemma \ref{l7}.
\end{proof}
\begin{lemma}\label{l8}
 If $\mathrm{L}$ is a lattice in $\mathrm{SO}(p,q)$, then $\mathrm{L}$ is also a lattice in $\mathrm{O}(p,q)$.
\end{lemma}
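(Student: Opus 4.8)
\textbf{Proof proposal for Lemma \ref{l8}.}

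The plan is to use that $\mathrm{SO}(p,q)=\mathrm{SO}(B)(\R)$ is a closed subgroup of finite index $d$ in $\mathrm{O}(p,q)=\mathrm{O}(B)(\R)$, as already recorded in Section 3. First I would note that a closed finite-index subgroup of a topological group is automatically open (its complement is a finite union of closed cosets), so $\mathrm{SO}(p,q)$ is open in $\mathrm{O}(p,q)$; consequently any discrete subgroup of $\mathrm{SO}(p,q)$, in particular $\mathrm{L}$, is discrete in $\mathrm{O}(p,q)$. It then remains only to produce a finite $\mathrm{O}(p,q)$-invariant measure on $\mathrm{L}\backslash\mathrm{O}(p,q)$.

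Next I would fix a Haar measure $\mu$ on $\mathrm{O}(p,q)$, which is unimodular (as the orthogonal groups used throughout this paper are), so that $\mu$ restricts to a Haar measure on the open subgroup $\mathrm{SO}(p,q)$. Since $\mathrm{L}$ is discrete, hence unimodular with counting measure, the quotient integral formula (the analogue of \eqref{e1}) produces an $\mathrm{O}(p,q)$-invariant measure $\bar\mu$ on $\mathrm{L}\backslash\mathrm{O}(p,q)$. Because $\mathrm{L}\subseteq\mathrm{SO}(p,q)$ and $\mathrm{SO}(p,q)$ is open, the quotient map restricted to $\mathrm{SO}(p,q)$ realizes $\mathrm{L}\backslash\mathrm{SO}(p,q)$ as an open subset of $\mathrm{L}\backslash\mathrm{O}(p,q)$, and testing the integral formula on functions supported in $\mathrm{SO}(p,q)$ shows that $\bar\mu$ restricted to $\mathrm{L}\backslash\mathrm{SO}(p,q)$ is exactly the $\mathrm{SO}(p,q)$-invariant measure $\bar\mu_{\mathrm{SO}}$ for which $\bar\mu_{\mathrm{SO}}(\mathrm{L}\backslash\mathrm{SO}(p,q))<\infty$, since $\mathrm{L}$ is a lattice in $\mathrm{SO}(p,q)$.

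Then I would choose right coset representatives $g_1,\dots,g_d\in\mathrm{O}(p,q)$ with $\mathrm{O}(p,q)=\bigsqcup_{i=1}^{d}\mathrm{SO}(p,q)g_i$, which gives a partition $\mathrm{L}\backslash\mathrm{O}(p,q)=\bigsqcup_{i=1}^{d}(\mathrm{L}\backslash\mathrm{SO}(p,q))g_i$. Each piece is the image of $\mathrm{L}\backslash\mathrm{SO}(p,q)$ under right translation by $g_i$, which preserves $\bar\mu$ (by right $\mathrm{O}(p,q)$-invariance of $\bar\mu$), so $\bar\mu\bigl((\mathrm{L}\backslash\mathrm{SO}(p,q))g_i\bigr)=\bar\mu_{\mathrm{SO}}(\mathrm{L}\backslash\mathrm{SO}(p,q))$ for every $i$. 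Summing over $i$ gives $\bar\mu(\mathrm{L}\backslash\mathrm{O}(p,q))=d\cdot\bar\mu_{\mathrm{SO}}(\mathrm{L}\backslash\mathrm{SO}(p,q))<\infty$, so $\mathrm{L}$ is a lattice in $\mathrm{O}(p,q)$.

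The only delicate point — and the main, if modest, obstacle — is the measure-theoretic bookkeeping: one must be careful that the $\mathrm{O}(p,q)$-invariant measure on $\mathrm{L}\backslash\mathrm{O}(p,q)$ genuinely restricts to the $\mathrm{SO}(p,q)$-invariant measure on the open subset $\mathrm{L}\backslash\mathrm{SO}(p,q)$, and that the right translations by the $g_i$ are measure preserving. Both facts rest on the unimodularity of $\mathrm{O}(p,q)$, and they are precisely what legitimizes the finite sum in the last step; everything else is formal.
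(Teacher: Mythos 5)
Your argument is correct and follows essentially the same route as the paper: restrict the $\mathrm{O}(p,q)$-invariant quotient measure on $\mathrm{L}\backslash\mathrm{O}(p,q)$ to the open subset $\mathrm{L}\backslash\mathrm{SO}(p,q)$, identify it (via lattice-ness of $\mathrm{L}$ in $\mathrm{SO}(p,q)$ and uniqueness of invariant measures up to scale) as a finite measure, and then cover $\mathrm{L}\backslash\mathrm{O}(p,q)$ by the finitely many right translates $(\mathrm{L}\backslash\mathrm{SO}(p,q))g_i$ coming from the coset decomposition, using right-invariance to bound the total mass. Your version is in fact slightly more careful than the paper's (you note the translates form a partition and that $\mathrm{L}$ is discrete in $\mathrm{O}(p,q)$ because $\mathrm{SO}(p,q)$ is open), but the substance is the same.
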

\begin{proof}
 Since $\mathrm{O}(p,q)$ is unimodular and $\mathrm{L}$ is a discrete subgroup of $\mathrm{O}(p,q)$, we get $\mathrm{L}\backslash\mathrm{O}(p,q)$ has a non-zero $\mathrm{O}(p,q)$-invariant measure $\mu$. Since  $\mathrm{SO}(p,q)$ is open in $\mathrm{O}(p,q)$, its Borel $\sigma$-algebra is a subalgebra of the Borel $\sigma$-algebra of $\mathrm{O}(p,q)$ and the restriction of $\mu$ on $\mathrm{L}\backslash\mathrm{SO}(p,q)$ is a non-zero $\mathrm{SO}(p,q)$-invariant measure. Now we claim that $\mu(\mathrm{L}\backslash\mathrm{O}(p,q))<\infty$. For, $$\mathrm{L}\backslash\mathrm{O}(p,q)=\{\mathrm{L}g|g\in\mathrm{O}(p,q)\},$$ and $$\mathrm{O}(p,q)=\{\mathrm{SO}(p,q)g_i|g_i\in\mathrm{O}(p,q),1\leq i\leq 4\},$$ i.e. $\forall g\in\mathrm{O}(p,q),\ \exists h\in\mathrm{SO}(p,q)\ \mbox{such that}\ g=hg_i$, for some $1\leq i\leq 4$. Therefore, $\mathrm{L}g=\mathrm{L}hg_i\in(\mathrm{L}\backslash\mathrm{SO}(p,q))g_i$, and $$\mathrm{L}\backslash\mathrm{O}(p,q)=\cup_{i=1}^{4}(\mathrm{L}\backslash\mathrm{SO}(p,q))g_i.$$ 
\begin{align*}\Rightarrow \mu(\mathrm{L}\backslash\mathrm{O}(p,q))&\leq\sum_{i=1}^{4}\mu((\mathrm{L}\backslash\mathrm{SO}(p,q))g_i)\\ &=\sum_{i=1}^{4}\mu(\mathrm{L}\backslash\mathrm{SO}(p,q))\\ &<\infty.
\end{align*}
It shows that $\mathrm{L}$ is a lattice in $\mathrm{O}(p,q)$.
\end{proof}
From the remark at the beginning of this section and Corollary \ref{c1}, it follows that if the Coxeter group $\W$ contains a finite index subgroup $\mathrm{L}\leq\mathrm{SO}(p,q)$, which is isomorphic to an irreducible lattice in a connected semisimple Lie group $\mathrm{H}$ without nontrivial compact factor groups, of real rank $\geq2$, then $\mathrm{SO}(p,q)$ has real rank $\geq 2$ (by Lemma \ref{l5} and Theorem \ref{t6}) i.e. $p,q\geq 2$, and $\mathrm{L}$ is a lattice in $\mathrm{SO}(p,q)$ (by Theorem \ref{t6} and Lemma \ref{l6}). Moreover, Lemma \ref{l8} shows that $\mathrm{L}$ is a lattice in $\mathrm{O}(p,q)$ also, and hence $\W$ becomes a lattice in $\mathrm{O}(p,q)$ (since a discrete subgroup $\W$ of a Lie group $\G$ which contains a lattice $\mathrm{L}$, is a lattice in $\G$). This is a contradiction to Theorem \ref{t3}, which has been proved in Section \ref{s2}; and it completes the proof of Theorem \ref{t1}. \qed

\section{Right-angled Coxeter group with three generators}\label{s3}
In this section we will do some computations and show that a right-angled Coxeter group $\W$ generated by 3 elements is isomorphic to a lattice in the group $\mathrm{O}(B)(\R)=\mathrm{O}(2,1)$ of real rank 1. 

Let $\W$ be the right-angled Coxeter group generated by 3 elements $s_1, s_2$ and $s_3$ with the relations: $(s_is_j)^{m_{i,j}}=1$, where $m_{i,i}=1, \forall i\in\{1,2,3\}$,  and $m_{1,2}=m_{2,3}=\infty$, $m_{1,3}=2$. Let $\R^3$ be a 3-dimensional vector space over $\R$ with a basis $\{e_1, e_2, e_3\}$. We define a symmetric bilinear form $B$ on $\R^3$ as $$B(e_i,e_j)=-\mbox{cos}\left(\frac{\pi}{m_{i,j}}\right),\quad\mbox{for} \ m_{i,j}\neq\infty,$$ and for $m_{i,j}=\infty$, we define $B(e_i,e_j)=-1$. With respect to the basis $\{e_1, e_2, e_3\}$, the matrix of $B$ is
$$B=\begin{pmatrix}
     1 & -1 & 0\\
-1 & 1& -1\\
0 & -1 & 1
    \end{pmatrix}.$$
One can check that the bilinear form $B$ is non-degenerate. 

Now we define a representation $\rho:\W\longrightarrow\GL(\R^3)$ by defining $\rho(s_i)(e_j)=e_j-2B(e_j,e_i)e_i$, which is faithful (by \cite[Corollary 5.4]{Hum}). It can be checked easily that $\rho$ maps the group $\W$ inside the orthogonal group $\mathrm{O}(B)(\R)$ of the bilinear form $B$. We will show  that the group $\W$ is mapped (by $\rho$) onto a finite index subgroup of $\mathrm{O}(B)(\Z)$, the group of integral points of the orthogonal group $\mathrm{O}(B)$, and it shows that the group $\W$ is a lattice in $\mathrm{O}(B)(\R)$. 

With respect to the basis $\{e_1, e_2, e_3\}$, the matrices of $\rho(s_1)$, $\rho(s_2)$ and $\rho(s_3)$ are
$$\rho(s_1)=\begin{pmatrix}
             -1 & 2 & 0\\
0 & 1 & 0\\
0 & 0 & 1
            \end{pmatrix},
\rho(s_2)=\begin{pmatrix}
             1 & 0 & 0\\
2 & -1 & 2\\
0 & 0 & 1
            \end{pmatrix},
\ \rho(s_3)=\begin{pmatrix}
                 1 & 0 & 0\\
0 & 1 & 0\\
0 & 2 & -1
                \end{pmatrix}.
$$
If we do some integral change in the basis of $\R^3$ over $\R$, and take $\{e_1+e_2, e_2, e_2+e_3\}$ as a basis of $\R^3$, then the corresponding matrices of $ \rho(s_1),\rho(s_2), \rho(s_3) \ \mbox{and}\ B,$ become
$$\rho(s_1)=\begin{pmatrix}
 1 & 2 & 2\\
0 & -1 & -2\\
0 & 0 & 1
\end{pmatrix},\rho(s_2)=\begin{pmatrix}
             1 & 0 & 0\\
0 & -1 & 0\\
0 & 0 & 1
            \end{pmatrix},\rho(s_3)=\begin{pmatrix}
                 1 & 0 & 0\\
-2 & -1 & 0\\
2 & 2 & 1
                \end{pmatrix}$$ and 
$$B=\begin{pmatrix}
     0 & 0 & -1\\
0 & 1 & 0\\
-1 & 0  & 0
    \end{pmatrix}.$$
It is now clear that the signature of the bilinear form $B$ is $(2,1)$.

The adjoint representation of $\SL(2,\R)$ on its Lie algebra $\mathrm{sl}(2,\R)$, maps the group $\mathrm{PSL}(2,\R)=\SL(2,\R)/\{\pm\mathrm{I}\}$ isomorphically onto its image and it preserves the killing form $K$ defined on $\mathrm{sl}(2,\R)$. The Lie algebra $\mathrm{sl}(2,\R)$ can be identified with $\R^3$ as a vector space over $\R$, with the basis $$\left\{e_1=\begin{pmatrix}
0 & 1\\
0 & 0                                                                                                     \end{pmatrix},e_2=\begin{pmatrix}
1 & 0\\
0 & -1                                                                                                     \end{pmatrix},e_3=\begin{pmatrix}
0 & 0\\
1 & 0                                                                                                     \end{pmatrix}\right\}.$$
 The killing form $K$ on $\mathrm{sl}(2,\R))$ is defined by
$$K(X,Y)=\frac{1}{2}\mbox{tr}(XY),\quad \forall \quad X,Y\in\mathrm{sl}(2,\R).$$ 
If we do some integral change in the basis of $\mathrm{sl}(2,\R)$ over $\R$ and take 
$$\left\{\epsilon_1=-2e_1=\begin{pmatrix}
0 & -2\\
0 & 0                                                                                                     \end{pmatrix},\epsilon_2=e_2=\begin{pmatrix}
1 & 0\\
0 & -1                                                                                                     \end{pmatrix},\epsilon_3=e_3=\begin{pmatrix}
0 & 0\\
1 & 0                                                                                                     \end{pmatrix}\right\}$$ as a basis of $\mathrm{sl}(2,\R)$ over $\R$, then the matrix of $K$ becomes
$$K=\begin{pmatrix}
     0 & 0 & -1\\
0 & 1 & 0\\
-1 & 0  & 0
    \end{pmatrix}.$$ Therefore the bilinear form $B$ associated to the Coxeter group $\W$, is equivalent to the killing form $K$ on $\mathrm{sl}(2,\R)$ over $\Z$, and the signature of $K$ is also $(2,1)$. Hence the group $\SL(2,\R)/\{\pm\mathrm{I}\}$ maps into $\mathrm{O}(2,1)\leq\GL(3,\R)$, by the adjoint representation $\mathrm{Ad}$ of $\SL(2,\R)$ on its Lie algebra, where $\mathrm{O}(2,1)=\mathrm{O}(B)(\R)$. Since the group $\SL(2,\R)/\{\pm\mathrm{I}\}$ is connected, it is mapped inside $\mathrm{SO}(2,1)$,  the connected component of the identity in $\mathrm{O}(2,1)$. In fact, $\mathrm{Ad}(\SL(2,\R)/\{\pm\mathrm{I}\})$=$\mathrm{SO}(2,1)$ ($\because$ dim $\SL(2,\R)/\{\pm\mathrm{I}\}$=dim $\mathrm{SO}(2,1)$) i.e. $\SL(2,\R)/\{\pm\mathrm{I}\}\cong\mathrm{SO}(2,1)$. Hence $\SL(2,\Z)/\{\pm\mathrm{I}\}$ is a lattice in $\mathrm{SO}(2,1)$. In fact, $\SL(2,\Z)/\{\pm\mathrm{I}\}$ is a lattice in  $\mathrm{O}(2,1)$ ($\because$ $\mathrm{SO}(2,1)$ has finite index in $\mathrm{O}(2,1)$).

The right-angled Coxeter group $\W$ is mapped inside $\mathrm{O}(B)(\Z)=\mathrm{O}(2,1)(\Z)$, by the representation $\rho$. We  construct a finite index subgroup $\mathrm{H}$ of $\SL(2,\Z)/\{\pm\mathrm{I}\}$ which preserves a lattice $\mathrm{L}$ in $\mathrm{sl}(2,\R)=\R^3$(as a vector space) i.e. $\mathrm{H}$ is also mapped inside $\mathrm{O}(2,1)(\Z)$, by the representation $\mathrm{Ad}$, and being a finite index subgroup of $\SL(2,\Z)/\{\pm\mathrm{I}\}$,  $\mathrm{H}$ becomes a lattice in  $\mathrm{O}(2,1)$. Also, we construct a finite index subgroup $\mathrm{H'}$ of $\W$ which is mapped onto $\mathrm{Ad}(\mathrm{H})$, by the representation $\rho$, and hence $\rho(\mathrm{H'})$ becomes a lattice in $\mathrm{O}(2,1)$, and $\W$ becomes a finite index subgroup of $\mathrm{O}(2,1)(\Z)$ i.e. a lattice in $\mathrm{O}(2,1)$.
\begin{lemma}
 The group $\SL(2,\Z)/\{\pm\mathrm{I}\}$ is generated by $w=\begin{pmatrix}
                                                      0 & -1\\
1 & 0
                                                     \end{pmatrix}$ and $x=\begin{pmatrix}
1 & 1\\
0 & 1
\end{pmatrix}$, and it has a presentation as $<w,x;w^2,(wx)^3>$ i.e. it is the free product of the cyclic group of order 2 generated by $w$ and the cyclic group of order 3 generated by $wx$.
\end{lemma}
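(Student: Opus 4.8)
The plan is to reprove the classical identification of $\SL(2,\Z)/\{\pm\mathrm{I}\}$ with the free product $C_2\ast C_3$ in four moves, the first three of which are essentially bookkeeping.

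\emph{Generation.} First I would show that $\SL(2,\Z)=\langle w,x\rangle$ by the Euclidean algorithm: given $g=\left(\begin{smallmatrix}a&b\\c&d\end{smallmatrix}\right)\in\SL(2,\Z)$ with $c\neq 0$, pick $q\in\Z$ with $a=qc+r$, $0\le r<|c|$; then $w\,x^{-q}g=\left(\begin{smallmatrix}-c&-d\\r&\ast\end{smallmatrix}\right)$ has lower-left entry $r$ of strictly smaller absolute value, so induction on $|c|$ reduces everything to the case $c=0$, where $ad=1$ forces $g=\pm x^{\,k}$ for some $k$. Since $w^{2}=-\mathrm{I}$, this shows $g\in\langle w,x\rangle$. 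Passing to the quotient, $\SL(2,\Z)/\{\pm\mathrm{I}\}$ is generated by the images of $w$ and $x$, which I continue to denote $w,x$.

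\emph{The relations and the shape of the presentation.} A direct matrix computation gives $w^{2}=-\mathrm{I}$ and $(wx)^{3}=-\mathrm{I}$, hence $w^{2}=(wx)^{3}=1$ in $\SL(2,\Z)/\{\pm\mathrm{I}\}$, so there is a surjective homomorphism $\Phi\colon\langle w,x\mid w^{2},(wx)^{3}\rangle\twoheadrightarrow\SL(2,\Z)/\{\pm\mathrm{I}\}$. Introducing a new generator $t:=wx$ and eliminating $x=wt$ (using $w^{2}=1$), the presentation $\langle w,x\mid w^{2},(wx)^{3}\rangle$ is transformed by Tietze moves into $\langle w,t\mid w^{2},t^{3}\rangle=C_{2}\ast C_{3}$, the free product of $\langle w\rangle\cong C_{2}$ and $\langle wx\rangle\cong C_{3}$. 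So it remains only to prove that $\Phi$ is injective.

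\emph{Injectivity via ping-pong.} I would let $\SL(2,\Z)/\{\pm\mathrm{I}\}$ act on $\mathbb{P}^{1}(\R)=\R\cup\{\infty\}$ by Möbius transformations. Then $w$ acts as $z\mapsto -1/z$ and $t=wx$ (the class of $\left(\begin{smallmatrix}0&-1\\1&1\end{smallmatrix}\right)$) acts as $z\mapsto -1/(z+1)$, an order-$3$ transformation whose orbit $\{0,-1,\infty\}$ cuts $\mathbb{P}^{1}(\R)$ into the three open arcs $L_{1}=(-1,0)$, $L_{2}=(0,\infty)$, $L_{3}=(-\infty,-1)$, which $t$ cyclically permutes. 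A one-line computation gives $t(L_{2})=L_{1}$ and $t^{2}(L_{2})=L_{3}$, while $w(L_{1})=(1,\infty)$ and $w(L_{3})=(0,1)$ both lie in $L_{2}$. Setting $\Omega_{1}=L_{1}\cup L_{3}$ and $\Omega_{2}=L_{2}$ — two nonempty disjoint sets — we therefore have $t(\Omega_{2}),\,t^{2}(\Omega_{2})\subseteq\Omega_{1}$ and $w(\Omega_{1})\subseteq\Omega_{2}$, so the ping-pong lemma (applied with the order-$3$ factor $\langle t\rangle$ as the ``large'' one) yields $\langle w,t\rangle\cong\langle w\rangle\ast\langle t\rangle=C_{2}\ast C_{3}$. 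Since $x=wt$, this subgroup is all of $\SL(2,\Z)/\{\pm\mathrm{I}\}$, and as $\Phi$ carries each free factor of $C_{2}\ast C_{3}$ isomorphically onto the corresponding factor $\langle w\rangle$, $\langle t\rangle$ (which have orders $2$ and $3$ because $S\neq\pm\mathrm{I}$ and $ST,(ST)^{2}\neq\pm\mathrm{I}$), it must be injective. This proves the lemma.

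\emph{Main obstacle.} Everything except the ping-pong step is routine; the point requiring care is the choice of the arcs $\Omega_{1},\Omega_{2}$ and the verification of the inclusions, in particular keeping track of the boundary points $0,-1,\infty$ so that none of them is accidentally swept into $\Omega_{1}$ or $\Omega_{2}$. (Alternatively, this step can be replaced by Poincaré's polygon theorem applied to the standard fundamental domain $\{z\in\mathbb{H}:|z|\ge 1,\ |\mathrm{Re}\,z|\le\tfrac12\}$ for the action on the upper half-plane $\mathbb{H}$, whose side-pairing transformations are $w$ and $x$ and whose edge and vertex cycle relations are exactly $w^{2}$ and $(wx)^{3}$; but the ping-pong argument is shorter and avoids hyperbolic geometry.)
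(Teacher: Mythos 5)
Your proof is correct, but it is a genuinely different route from the paper, which does not prove the lemma at all: it simply cites Theorem 2 (and the remark preceding it) of Chapter VII of Serre's \emph{A Course in Arithmetic}, where the free-product decomposition of $\SL(2,\Z)/\{\pm\mathrm{I}\}$ is obtained from the action on the upper half-plane and the standard fundamental domain. You instead give a self-contained argument: generation of $\SL(2,\Z)$ by $w,x$ via the Euclidean algorithm (your matrix bookkeeping there is right), the Tietze move identifying $\langle w,x\mid w^2,(wx)^3\rangle$ with $C_2\ast C_3$, and then injectivity by ping-pong on $\mathbb{P}^1(\R)$ with $\Omega_1=(-1,0)\cup(-\infty,-1)$, $\Omega_2=(0,\infty)$; I checked the inclusions $t(\Omega_2)\subseteq(-1,0)$, $t^2(\Omega_2)\subseteq(-\infty,-1)$, $w(\Omega_1)\subseteq(0,\infty)$, and they hold, and you correctly place the order-$3$ factor in the role required by the two-subgroup ping-pong lemma and verify that $w$ and $wx$ have exact orders $2$ and $3$ in the quotient. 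What your route buys is independence from hyperbolic geometry and from the fundamental-domain analysis that Serre's cited proof (and your own alternative via Poincar\'e's polygon theorem) would require; what the paper's citation buys is brevity, since this is classical. Two cosmetic points: in the final parenthesis you slip into the notation $S$, $ST$ without having introduced it (you mean $w$ and $wx$), and you should state explicitly which version of the ping-pong lemma you invoke (two subgroups, one of order at least $3$, nonempty disjoint sets), since the naive version fails for two factors of order $2$ --- your setup does satisfy the correct hypotheses.
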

For a proof, see Theorem 2 and the preceding remark of Chapter VII in \cite{Ser}.

We get $x^2=\begin{pmatrix}
             1 & 2\\
0 & 1
            \end{pmatrix}\mbox{ and }  wx^2w^{-1}=\begin{pmatrix}
1 & 0\\
-2 & 1
\end{pmatrix}=\begin{pmatrix}
1 & 0\\
2 & 1
\end{pmatrix}^{-1}.$

Let $\mathrm{H}$ be the subgroup of $\SL(2,\Z)/\{\pm\mathrm{I}\}$ generated by $\{x^2, wx^2w^{-1}\}.$ It can be shown using the  presentation of $\SL(2,\Z)/\{\pm\mathrm{I}\}$ as in the above lemma, that the subgroup $\mathrm{H}$ has finite index in $\SL(2,\Z)/\{\pm\mathrm{I}\}$. Also, one can show easily that $\mathrm{H}$ preserves the lattice $$\mathrm{L}=\Z\begin{pmatrix}
0 & -2\\
0 & 0 \end{pmatrix}\oplus\Z\begin{pmatrix}
1 & 0\\
0 & -1  \end{pmatrix}\oplus\Z\begin{pmatrix}
0 & 0\\
1 & 0  \end{pmatrix}$$ in $\mathrm{sl}(2,\R)$. Hence $\mathrm{H}$ is mapped inside $\mathrm{O}(2,1)(\Z)$, by the adjoint representation $\mathrm{Ad}$, and being a lattice ($\because$ it has finite index in $\SL(2,\Z)/\{\pm\mathrm{I}\}$) in $\mathrm{O}(2,1)(\R)$, it has finite index in $\mathrm{O}(2,1)(\Z)$. By an easy computation, we find that the matrices of $\mathrm{Ad}(x^2)$, $\mathrm{Ad}(wx^2w^{-1})^{-1}$ in $\mathrm{O}(2,1)(\R)$ with respect to the basis $$\left\{\epsilon_1=-2e_1=\begin{pmatrix}
0 & -2\\
0 & 0                                                                                                     \end{pmatrix},\epsilon_2=e_2=\begin{pmatrix}
1 & 0\\
0 & -1                                                                                                     \end{pmatrix},\epsilon_3=e_3=\begin{pmatrix}
0 & 0\\
1 & 0                                                                                                     \end{pmatrix}\right\},$$ are 
\begin{align}\label{e7}
\mathrm{Ad}(x^2)=\begin{pmatrix}
                                                 1 & 2 & 2\\
0 & 1 & 2\\
0 & 0 & 1
                                                \end{pmatrix},\quad \mathrm{Ad}(wx^2w^{-1})^{-1}=\begin{pmatrix}
                                                 1 & 0 & 0\\
4 & 1 & 0\\
8 & 4 & 1
                                                \end{pmatrix}.\end{align}
Let $\mathrm{H'}$ be the subgroup of the Coxeter group $\W$ generated by the set $\{s_2s_1, s_2s_3\}.$ It can be shown easily that the subgroup $\mathrm{H'}$ has finite index in the group $\W$. We find that the matrices of $\rho(s_2s_1)$ and $\rho(s_2s_3)$ in $\mathrm{O}(2,1)(\R)$ with respect to the basis $\{e_1+e_2, e_2, e_2+e_3\}$, are 
\begin{align}\label{e8}
 \rho(s_2s_1)=\begin{pmatrix}
 1 & 2 & 2\\
0 & 1 & 2\\
0 & 0 & 1
\end{pmatrix}\quad\mbox{ and }\quad \rho(s_2s_3)=\begin{pmatrix}
             1 & 0 & 0\\
2 & 1 & 0\\
2 & 2 & 1
            \end{pmatrix}.
\end{align}
Also, $\rho(s_2s_3)^2=\begin{pmatrix}
             1 & 0 & 0\\
4 & 1 & 0\\
8 & 4 & 1
            \end{pmatrix}=\mathrm{Ad}(wx^2w^{-1})^{-1}$, and hence by (\ref{e7}) and (\ref{e8}), we see that $\mathrm{H}$ is a subgroup of $\mathrm{H'}$. Therefore $\mathrm{H'}$ is a finite index subgroup of $\mathrm{O}(2,1)(\Z)$, and hence the Coxeter group $\W$ is also a finite index subgroup of $\mathrm{O}(2,1)(\Z)$ i.e. $\W$ is a lattice in $\mathrm{O}(2,1)$.

\section*{Acknowledgements}

I sincerely thank my advisor Professor T. N. Venkataramana for suggesting me this problem and for many useful discussions. I also thank Professor James E. Humphreys for suggesting me a reference to a Bourbaki (\cite{Bou}) exercise (Theorem \ref{t3} in this paper).

\end{document}